%
%
%
%
\documentclass[oneside,reqno]{amsart}
\usepackage{amssymb}
\usepackage{graphicx}
\usepackage{amssymb}
\usepackage[active]{srcltx}
\usepackage{a4wide}
\usepackage{amsmath}
\usepackage{amssymb}
\usepackage{amstext}
\usepackage{cite}
\usepackage{epsfig}
\usepackage{enumerate}
\usepackage{color}
\usepackage{amssymb}
\usepackage{float}
\everymath{\displaystyle}
\newtheorem{theorem}{Theorem}[section]
\newtheorem{lemma}[theorem]{Lemma}
\usepackage[linkcolor=blue, urlcolor=blue, citecolor=blue,
colorlinks, bookmarks]{hyperref}
\theoremstyle{definition}
\newtheorem{definition}[theorem]{Definition}

\newtheorem{example}[theorem]{Example}

\newtheorem{note}[theorem]{Note}

\theoremstyle{remark}
\newtheorem{remark}[theorem]{Remark}

\newcommand{\R}{\mathbb{R}}

\newcommand{\be}{\begin{equation}}
\newcommand{\ee}{\end{equation}}
\numberwithin{equation}{section}


\usepackage{scalerel,stackengine}
\stackMath
\newcommand\reallywidehat[1]{%
\savestack{\tmpbox}{\stretchto{%
  \scaleto{%
    \scalerel*[\widthof{\ensuremath{#1}}]{\kern-.6pt\bigwedge\kern-.6pt}%
    {\rule[-\textheight/2]{1ex}{\textheight}}
  }{\textheight}%
}{0.5ex}}%
\stackon[1pt]{#1}{\tmpbox}%
}



\begin{document}

\title[]{Box Dimension and Fractional Integrals of Multivariate Fractal Interpolation Functions}


\author{Vishal Agrawal}
\address{Department of Mathematical Sciences, IIT(BHU), Varanasi, India 221005}
\email{vishal.agrawal1992@gmail.com}
\author{Megha Pandey}
\address{Department of Mathematical Sciences, IIT(BHU), Varanasi, India 221005}
\email{meghapandey1071996@gmail.com}
\author{Tanmoy Som}
\address{Department of Mathematical Sciences, IIT(BHU), Varanasi, India 221005}
\email{tsom.apm@iitbhu.ac.in}



\subjclass[2010]{28A80, 41A29, 41A30.}
\keywords{Fractal function,  fractal dimension, self-referential equation, Riemann-Liouville fractional integral, iterated function system.}

\begin{abstract}
In this article, we construct the multivariate fractal interpolation functions for a given data points and explore the existence of $\alpha$-fractal function corresponding to the multivariate continuous function defined on $[0,1]\times \cdots \times [0,1](q\text{-times})$. The parameters are selected such that the corresponding fractal version preserves some of the original function's properties, for instance if the given function is H\"older continuous, then the corresponding $\alpha$-fractal function is also H\"older continuous. Moreover, we explore the restriction of $\alpha$-fractal function on the co-ordinate axis. Furthermore, the box dimension and Hausdorff dimension of the graph of the multivariate $\alpha$-fractal function and its restriction are investigated. In the last section, we prove that mixed Riemann-Liouville fractional integral of fractal function satisfies a self-referential equation. 
\end{abstract}

\maketitle
\section{Introduction}
In \cite{MF1}, Barnsley used the Iterated Function System (IFS) to develop Fractal Interpolation Function (FIF). It is vital to select IFS adequately so that it can be used as an attractor for a graph of a continuous function referred to as FIF. Calculating the dimensions of fractal functions has been a major aspect of fractal geometry since its origin. Barnsley has established estimates for the Hausdorff dimension of an affine FIF in \cite{MF1} and Falconer \cite{Fal} has reported a related conclusion. 

Barnsley and associates \cite{MF1,MF2,MF4} determined the box dimension of the affine FIF classes. FIFs constructed by bilinear functions have been investigated in \cite{MF4}, a formula for the box dimension of FIFs from $\R^n$ to $\R^m$ can be seen in \cite{PM1}. Theories regarding the box dimension and fractional integral of FIF has been developed in the literature for instance, in \cite{RuanJAT2}, Ruan et.al., has studied the box dimension and fractional integral of linear FIFs. Wang et.al., \cite{WY} proposed the analytical properties of FIFs with variable parameters.
There is a special type of FIF that has been developed in the literature known as $\alpha$- fractal function \cite{Nav}. This type of FIFs is found to be very useful.
In \cite{BDD}, construction of recurrent bivariate Fractal Interpolation Surfaces (FIS) and estimation of their box dimension has been explored,  Chandra and Abbas \cite{SS} have studied fractional calculus of fractal surfaces developed by Ruan and Xu \cite{Ruan}. Agrawal and Som \cite{EPJST} have investigated the fractal dimension of the $\alpha$-fractal function on the Sierpi\'nski Gasket . Sahu and Priyadarshi have computed the box-counting dimension of graphs of fractal interpolation functions and harmonic functions on the Sierpi\'nski Gasket  in \cite{Sahu}. Verma et.al., have studied the Box dimension and Hausdorff dimension of the graph of the bivariate fractal function. Author of \cite{Maly} has calculated the exact values for the Minkowski dimension of the bivariate FIS and extended this construction to higher dimensions. Verma \cite{Verma2} has studied some important results of the attractor generated by a finite set of contractive infinitesimal similitudes.  In \cite{SSn}, box dimension of the graph of the mixed Weyl–Marchaud fractional derivative of a continuous function defined on $\mathbb{R}^2$ has been derived by Chandra and Abbas. Ruan, Xiao and Yang \cite{Ruann2} have extended the concept of recurrent FIFs such that iterated functions in a recurrent system do not have to be contractive with respect to the first variable. Furthermore, they have determined the box dimensions of all self-affine recurrent FIFs.
In \cite{Zhen}, Liang and Ruan have constructed the recurrent FISs. Furthermore, they have proposed recurrent bilinear FISs. Then, they have calculated the box dimension of bilinear recurrent FISs under certain constraints. In \cite{MR}, Roychowdhury has estimated the Hausdorff and upper box dimension of hyperbolic recurrent sets. Recently,  Verma and Sahu have initiated a systematic study of fractal dimension of functions defined on the Sierpi\'nski Gasket in \cite{VFS}.
Liang \cite{Liang} has established the relationship between the bounded variation function on a compact unit interval and its Riemann–Liouville fractional integral. On closed intervals, Liang \cite{ Liang1} has investigated the  fractal dimensions of fractional calculus of continuous functions. Furthermore, the fractal dimension of the fractional integral of continuous functions having a unit fractal dimension on the compact unit interval has been studied by Liang \cite{Liang11}. Recently, Chandra and Abbas have studied the fractal dimension of the graph of the mixed Riemann-Liouville fractional integral for several continuous function on rectangular region in \cite{ssna}. The author of \cite{Feng} has studied the fractal interpolation surface on the rectangular domain and discussed the oscillation and variation aspects of bivariate continuous functions. In \cite{pandey2022some,Megha2022}, Pandey et.al., introduced the notion of multivariate $\alpha$-fractal function.  We intend to present a technique for constructing a multivariate fractal function on hypercube, as proposed by Verma \cite{Jha}. In particular, we focus on fractal dimensions of multivariate fractal functions.

The paper is organised as follows. In the next section, we noted the preliminaries and notations which will be required throughout the study.
Section \ref{section2} is dedicated to the construction of multivariate fractal function for a given data point (subsection \ref{subsec1}) and the establishing the existence of $\alpha$-fractal function corresponding to the given continuous function defined on $I^q$ (subsection \ref{FIF}). Section \ref{Bdm} is reserved for the study of box dimension and Hausdorff dimension of the constructed multivariate $\alpha$-fractal function and based on the clue we noticed that if the given function is H\"older continuous, then for suitable parameters the associated $\alpha$-fractal function is also H\"older continuous. Moreover, in Subsection \ref{Restrictions}, we studied the restrictions of $\alpha$-fractal function on the axis. In Section \ref{Mrl}, we proved that mixed Riemann-Liouville fractional integral of fractal interpolation function satisfies the self-referential equation.

\section{Notation and Prelude}
The following are the notations we shall be using throughout the paper:
\begin{itemize}
    \item $\mathbb{N}$ : set of natural numbers.
    \item $\mathbb{R}$ : set of real numbers.
    \item $I$ : unit interval $[0,1]$.
    \item $I^q=I\times \cdots \times I$: $q$-times Cartesian product of $[0,1]$.
    \item $\mathcal{C}(I^q)$ : set of all continuous functions on $I^q$.
\end{itemize}

\begin{definition}
  Let $X$ and $Y$ be metric spaces and $h:X\rightarrow Y$ be a function from $X$ to $Y$, then graph of $h$ is defined as,
 \begin{equation}\label{Gf1}
   \mathcal{G}(h)=\{(x,h(x))~:~ x\in X \}.
 \end{equation}
 
 \begin{definition}
      Consider $U \ne \emptyset$ and $U \subset \mathbb{R}^q $ under the Euclidean norm $\|.\|_2$. Furthermore, the diameter of $U$ denoted by $|U|$ is defined as follows: 
      \[|U|=\sup\big\{\|x-y\|_2: x,y \in U\big\}.\]
      If $\{U_i\}$ is a countable  collection of sets of diameter at most $\delta$ that cover $F,$ we say that $\{U_i\}$ is a $\delta$-cover of $F.$ Assume that $F \subset \mathbb{R}^q$ and $s$ is a non-negative real number. For any $\delta >0,$ we define 
      \[H^s_{\delta}(F)= \inf \left\{\sum_{i=1}^{\infty}|U_i|^s:  \{U_i\} \text{ is a } \delta \text{-cover of } F \right\}.\] 
      The $s$-dimensional Hausdorff measure of $F$ is defined as $H^s(F)= \lim_{\delta \rightarrow 0}H^s_{\delta}(F).$
    \end{definition}
     \begin{definition}
     Let $F \subseteq \mathbb{R}^n$ and $s \ge 0.$ The Hausdorff dimension of $F$ is 
     \[\dim_H(F)=\inf\{s:H^s(F)=0\}=\sup\{s:H^s(F)=\infty\}.\]
     \end{definition}
    
     \begin{definition} Let $F \ne \emptyset$ be a bounded subset of $\mathbb{R}^n$ and let $N_{\delta}(F)$ be the smallest number of sets of diameter at most $\delta$ which can cover $F.$ The lower box dimension and upper box dimension of $F$, respectively, are defined as \[\underline{\dim}_B(F)=\varliminf_{\delta \rightarrow 0} \frac{\log N_{\delta}(F)}{- \log \delta},\]
     and
     \[\overline{\dim}_B(F)=\varlimsup_{\delta \rightarrow 0} \frac{\log N_{\delta}(F)}{- \log \delta}.\]
     Whenever the aforementioned values are equal, a common value is referred to as the box dimension of $F,$
      that is,  
      \[\dim_B(F)=\lim_{\delta \rightarrow 0} \frac{\log N_{\delta}(F)}{- \log \delta}.\]
     \end{definition}
To study about these dimensions in more details it is recommend the reader to follow \cite{Fal}.

\end{definition}

\section{Multivariate fractal interpolation function and $\alpha$-fractal function}\label{section2}
\subsection{Multivariate Fractal Interpolation Function}\label{subsec1}
The data of interpolation is given as 
\[\left\{\left(x_{1,i_1},\ldots, x_{q,i_q}, z_{(1,i_1),\ldots,(q,i_q)}\right): i_1=0,1,\ldots, M_1; ~ \ldots ~; i_q=0,1,\ldots, M_q\right\},\]
such that $0=x_{k,0}<\cdots < x_{k,M_k}=1$ for each $I_k$, where $I_k$ denotes the $k^{th}$ unit interval in $I^q$.\\
For our convenience, we use the following notations in the rest of the study:\\
$ \Sigma_k=\{1,2,\ldots,k\},$ $\Sigma_{k,0}=\{0,1,\ldots, k \},$ $\partial  \Sigma_{k,0}=\{0,k\} $ and int$\Sigma_{k,0}=\{1,2,\ldots,k-1\}.$ Further, a net $\Delta$ on $I^q$ is defined as follows:
\[\Delta:=\left\{(x_{1,i_1}
, \ldots , x_{q,i_q}) \in I ^q: i_k \in \Sigma_{M_k,0}, ~ 0= x_{k,0} <\ldots < x_{k,M_k} =1,~ k \in \Sigma_{q}\right\}.\]
For each $i_k \in  \Sigma_{M_{k}}$, let us consider $I_{k,i_{k}} = [x_{k,i_{k}-1}, x_{k,i_{k}}]$ and define contraction functions $u_{k,i_{k}}
: I_k \rightarrow I_{k,i_k}$ such that
\begin{equation}\label{affine}
    \begin{aligned}
        u_{k,i_k}(x_{k,0})=&x_{k,i_k-1}, \quad \quad u_{k,i_k}(x_{k,M_k})=x_{k,i_k}, \text{  if $i_k$ is odd,}\\
 u_{k,i_k}(x_{k,0})=&x_{k,i_k}, \quad \quad u_{k,i_k}(x_{k,M_k})=x_{k,i_k-1}, \text{ if $i_k$ is even, and }\\
 \left\lvert u_{k,i_k}(x)-u_{k,i_k}(y) \right\rvert &\leq \mu_{k,i_k}\lvert x-y\rvert \text{ for all } x,y \in I_k, \text{ and } 0< \mu_{k,i_k}<1.
    \end{aligned}
\end{equation}
It is easy to observe from the definition of $u_{k,i_k}$ that 
\[u_{k,i_{k}}^{-1}(x_{k,i_{k}})=u_{k,i_{k}+1}^{-1}(x_{k,i_k}) \text{ for all } i_k\in \text{int}\Sigma_{k,0} \text{ and } k\in \Sigma_q.\]
Define a function $\eta : \mathbb{Z}\times \{0, M_1, \ldots, M_q\}\rightarrow \mathbb{Z}$ by 
\begin{equation*}
    \begin{cases}
    \eta(i,0)=i-1, \quad \eta(i,M_1)=\cdots =\eta(i,M_q)=i, &\text{ when $i$ is odd,}\\
    \eta(i,0)=i, \quad \eta(i,M_1)=\cdots =\eta(i,M_q)=i-1, & \text{ when $i$ is even.}
    \end{cases}
\end{equation*}
 Set $K=I^q\times \mathbb{R}$. For each $(i_1,\ldots, i_q)\in \overset{q}{\underset{k=1}{\prod}} \Sigma_{M_k}$, let $F_{i_1,\ldots, i_q}:K\rightarrow \mathbb{R}$ be a continuous function satisfying
 \[F_{i_1\cdots i_q}\left(x_{1,k_1},\ldots, x_{q,k_q}, z_{(1,k_1)\cdots(q,k_q)}\right)=z_{(1,\eta(i_1,k_1))\cdots (q,\eta(i_q,k_q))} \text{ for all } (k_1,\ldots, k_q)\in \prod_{j=1}^{q}\partial \Sigma_{M_j,0}, \text{ and }\]
 \[\left\lvert F_{i_1\cdots i_q}(x_1,\ldots,x_q, z^{*})-F(x_1,\ldots, x_q, z^{**})\right \rvert \leq \gamma_{i_1\cdots i_q}\lvert z^{*}-z^{**}\rvert \text{ for all }(x_1,\ldots, x_q)\in I^{q},~ z^{*}, z^{**}\in \mathbb{R},\]
 and  $0<\gamma_{i_1\cdots i_q}<1~$ is any given constant.\\
For each $(i_1,\ldots, i_q)\in \overset{q}{\underset{k=1}{\prod}} \Sigma_{M_k}$, define a function $\Omega_{i_1\cdots i_q}:K\rightarrow \overset{q}{\underset{k=1}{ \prod}}I_{k,i_k}\times \mathbb{R}$ such that
\begin{equation}\label{IFS}
  \Omega_{i_1\cdots i_q}(x_1,\ldots, x_q, z)=\left(u_{1,i_1}(x_1),\ldots, u_{q,i_q}(x_q),F_{i_1\cdots i_q}(x_1,\ldots, x_q,z)\right).  
\end{equation}
Then, with the assumptions on $F_{i_1\cdots i_q}$ and $u_{k,i_k}$, for each $k\in  \Sigma_q$, one can prove that $\Omega_{i_1\cdots i_q}$ is a contraction function, hence $\left\{K,~\Omega_{i_1\cdots i_q}: (i_1,\ldots,i_q)\in \overset{q}{\underset{k=1}{ \prod}} \Sigma_{M_k}\right\}~$ is an IFS.
\begin{theorem}\label{matching}
Consider $\left\{K,\Omega_{i_1\cdots i_q}~: (i_1,\ldots,i_q)\in \overset{q}{\underset{k=1}{ \prod}} \Sigma_{M_k}\right\}$ be the IFS as defined in \eqref{IFS}. For every $k=1,\ldots,q$, suppose $F_{i_1\cdots i_q}$ satisfies the following matching conditions: for all $i_k\in \text{int} \Sigma_{M_{k},0},\text{ and } j\neq k,~i_j\in  \Sigma_{M_j} \text{ and } x_{k}^{*}=u_{k,i_k}^{-1}(x_{k,i_k})=u_{k,i_k+1}^{-1}(x_{k,i_k}),$
\begin{equation}
    F_{i_1\cdots i_q}(x_1,\ldots, x_{k}^{*}, x_{k+1},\ldots, x_q, z)=F_{i_1\cdots i_{k-1},i_{k}+1 i_{k+1}\cdots i_q}(x_1,\ldots, x_{k}^{*}, x_{k+1},\ldots, x_q, z)
\end{equation}
for all $x_j\in I_j, ~ z\in \mathbb{R}$. Then, there is a unique continuous function $\mathcal{A}: I^{q}\rightarrow \mathbb{R}$ such that \[\mathcal{A}(x_{1,i_1},\ldots, x_{q,i_q})=z_{(1,i_1)\cdots (q,i_q)} \text{ for all } (i_1,\ldots, i_q)\in \overset{q}{\underset{k=1}{ \prod}} \Sigma_{M_k,0},\]
and $H= \bigcup\left\{\Omega_{i_1\cdots i_q}(H): (i_1,\ldots,i_q)\in \overset{q}{\underset{k=1}{ \prod}} \Sigma_{M_k}\right\}$, where $H=\mathcal{G}(\mathcal{A})$.
We pronounce $H$ the FIS and $\mathcal{A}$ the FIF with respect to the IFS defined in \eqref{IFS}.
\end{theorem}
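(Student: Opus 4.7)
The plan is the Read-Bajraktarević fixed-point argument in multivariate form. First I would introduce the space
\[\cC^{*}(I^q) := \left\{f \in \cC(I^q) : f(x_{1,i_1},\ldots,x_{q,i_q}) = z_{(1,i_1)\cdots(q,i_q)} \text{ for all } (i_1,\ldots,i_q) \in \prod_{k=1}^{q}\Sigma_{M_k,0}\right\},\]
a closed (hence complete) subset of $(\cC(I^q),\|\cdot\|_\infty)$, and on it the piecewise operator
\[(Tf)(x_1,\ldots,x_q) := F_{i_1\cdots i_q}\bigl(u_{1,i_1}^{-1}(x_1),\ldots, u_{q,i_q}^{-1}(x_q),\, f(u_{1,i_1}^{-1}(x_1),\ldots, u_{q,i_q}^{-1}(x_q))\bigr)\]
defined for $(x_1,\ldots,x_q) \in \prod_{k=1}^{q} I_{k,i_k}$.

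The main obstacle is showing that $T$ is a well-defined self-map of $\cC^{*}(I^q)$. Within each open sub-box the definition is unambiguous. Genuine ambiguity only occurs on an interior face $\{x_k = x_{k,i_k}\}$ with $i_k \in \text{int}\,\Sigma_{M_k,0}$, where a point sits in both $\prod_{j \neq k} I_{j,i_j} \times I_{k,i_k}$ and $\prod_{j \neq k} I_{j,i_j} \times I_{k,i_k+1}$. Using $u_{k,i_k}^{-1}(x_{k,i_k}) = u_{k,i_k+1}^{-1}(x_{k,i_k})$, which follows immediately from the construction \eqref{affine}, the matching hypothesis forces the two candidate values of $Tf$ at such a point to coincide; this simultaneously yields continuity of $Tf$ across the interior grid. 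The interpolation condition for $Tf$ at a node $(x_{1,i_1},\ldots,x_{q,i_q})$ follows by unfolding the definition through any sub-box containing the node: $u_{k,j_k}^{-1}$ sends its $k$-th coordinate to $x_{k,0}$ or $x_{k,M_k}$, and the corner condition on $F_{j_1\cdots j_q}$ together with the definition of $\eta$ returns exactly $z_{(1,i_1)\cdots(q,i_q)}$. Hence $T(\cC^{*}(I^q)) \subseteq \cC^{*}(I^q)$.

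Contractivity is immediate from the uniform Lipschitz bound on $F_{i_1\cdots i_q}$ in its last argument:
\[\|Tf - Tg\|_\infty \le \gamma^{*}\,\|f - g\|_\infty, \qquad \gamma^{*} := \max_{(i_1,\ldots,i_q)} \gamma_{i_1\cdots i_q} < 1.\]
The Banach fixed point theorem then yields a unique $\mathcal{A} \in \cC^{*}(I^q)$ with $T\mathcal{A} = \mathcal{A}$; this $\mathcal{A}$ is the desired multivariate FIF.

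Finally, substituting $x_k = u_{k,i_k}(t_k)$ for $t_k \in I_k$ in $T\mathcal{A} = \mathcal{A}$ rewrites the fixed-point identity as the self-referential equation
\[\mathcal{A}\bigl(u_{1,i_1}(t_1),\ldots, u_{q,i_q}(t_q)\bigr) = F_{i_1\cdots i_q}\bigl(t_1,\ldots,t_q, \mathcal{A}(t_1,\ldots,t_q)\bigr),\]
which says precisely that $\Omega_{i_1\cdots i_q}$ maps each point $(t,\mathcal{A}(t)) \in H := \mathcal{G}(\mathcal{A})$ into $H$, i.e., $\Omega_{i_1\cdots i_q}(H) \subseteq H$. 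The reverse inclusion comes from the tiling $I^q = \bigcup_{(i_1,\ldots,i_q)} \prod_{k=1}^{q} I_{k,i_k}$: any $(x,\mathcal{A}(x)) \in H$ has $x \in \prod_k I_{k,i_k}$ for some multi-index, and setting $t_k := u_{k,i_k}^{-1}(x_k)$ exhibits it as $\Omega_{i_1\cdots i_q}(t,\mathcal{A}(t))$. Hence $H = \bigcup \Omega_{i_1\cdots i_q}(H)$, completing the proof.
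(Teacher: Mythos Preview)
Your proposal is correct and follows precisely the Read--Bajraktarevi\'{c} fixed-point approach that the paper invokes: the paper omits the proof entirely, stating only that it ``can be accomplished by using the technique similar to Ruan and Xu \cite{Ruan},'' and your argument is exactly that technique carried out in the multivariate setting. The paper in fact introduces the same RB operator immediately afterward in Subsection~\ref{subsubsec1}, confirming that your route is the intended one.
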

The proof of the above theorem can be accomplished by using the technique similar to Ruan and Xu \cite{Ruan}, hence it is omitted here.

\subsubsection{Multivariate Fractal Interpolation surfaces}\label{subsubsec1}
For $(i_1,\ldots, i_q)\in \prod_{k=1}^{q}\Sigma_{M_k,0}$, define $F_{i_1\cdots i_q}: K\rightarrow \mathbb{R}$ such that 
\[F_{i_1\cdots i_q}(x_1,\ldots, x_q, z)=\delta z+\mathcal{B}_{i_1\cdots i_q}(x_1,\ldots,x_q),\]
where
 \[\mathcal{B}_{i_1\cdots i_q}(x_{1,k_1},\ldots x_{q,k_q})=z_{(1,\eta(i_1,k_1)),\cdots, (q,\eta(i_q,k_q))}-\delta z_{(1,k_1)\cdots (q,k_q)} \text{ for all } (k_1,\ldots, k_q)\in \prod_{j=1}^{q}\partial \Sigma_{M_j,0}\] and $\lvert \delta \rvert<1.$
Then, observe that $F_{i_1\cdots i_q}$ satisfies the conditions of Theorem \ref{matching}.Now for all $(i_1,\ldots, i_q)\in \overset{q}{\underset{k=1}{ \prod}} \Sigma_{M_k}$, define Read-Bajraktarevi\'c (RB) operator, $T:\mathcal{C}(I^q)\rightarrow \mathcal{C}(I^q)$ such that
\[Tg(x_1,\ldots, x_q)=F_{i_1\cdots i_q}\left(u_{i,i_1}^{-1}(x_1),\ldots, u_{q,i_q}^{-1}(x_q), g\left(u_{1,i_1}^{-1}(x_1),\ldots,u_{q,i_q}^{-1}(x_q)\right)\right).\]
Then, the multivariate FIF,  $\mathcal{A}$ will be the unique fixed point of $T$. Then, for all $(x_1,\ldots, x_q)\in \overset{q}{\underset{k=1}{ \prod}}I_{k,i_k}$ and $(i_1,\ldots, i_q)\in \overset{q}{\underset{k=1}{ \prod}}~ \Sigma_{M_k}$,  $\mathcal{A}$ will satisfy the following self-referential equation:
\[
    \mathcal{A}(x_1,\ldots, x_q)=F_{i_1\cdots i_q}\left(u_{i,i_1}^{-1}(x_1),\ldots, u_{q,i_q}^{-1}(x_q), \mathcal{A}\left(u_{1,i_1}^{-1}(x_1),\ldots,u_{q,i_q}^{-1}(x_q)\right)\right.
\]
Therefore, we have
\begin{equation}\label{interpolation}
    \mathcal{A}(x_1,\ldots, x_q)=\delta \mathcal{A}\left(u_{1,i_1}^{-1}(x_1),\ldots,u_{q,i_q}^{-1}(x_q)\right)+\mathcal{B}_{i_1\cdots i_q}\left(u_{1,i_1}^{-1}(x_1),\ldots,u_{q,i_q}^{-1}(x_q)\right).
\end{equation}

\begin{note}\label{Naffine}
For every $x\in I_{k,{i_{k}}}$, define
$u_{k,i_{k}}(x)=a_{k,i_{k}}(x)+b_{k,i_{k}}$ such that
\begin{align*}
    a_{k,i_{k}}&=\frac{x_{k,i_{k}}-x_{k,i_{k-1}}}{x_{k,M_{k}}-x_{k,0}} \quad \text{and} \quad b_{k,i_{k}}= \frac{x_{k,i_{k}}x_{k,M_{k}}-x_{k,i_{k-1}}x_{k,0}}{x_{k,M_{k}}-x_{k,0}}, \text{  if $i_k$ is odd}\\
    a_{k,i_{k}}&=\frac{x_{k,i_{k-1}}-x_{k,i_{k}}}{x_{k,M_{k}}-x_{k,0}} \quad \text{and} \quad b_{k,i_{k}}=\frac{x_{k,i_{k-1}}x_{k,M_{k}}-x_{k,i_{k}}x_{k,0}}{x_{k,M_{k}}-x_{k,0}}, \text{  if $i_k$ is even}.
\end{align*}
 Then, obviously $u_{k,i_{k}}$ satisfies equation \eqref{affine}.
\end{note}


\subsection{$\alpha$- Fractal Functions} \label{FIF}
Let $f \in \mathcal{C}(I^q)$, then using the notions defined in Subsection \ref{subsec1}, let us denote
\[Q_{i_1\ldots i_q}(x_1,\ldots,x_q):= \left(u_{1,i_1}^{-1}(x_1),\ldots,u_{q,i_q}^{-1}(x_q)\right), \text{ for }
(x_1,\ldots,x_q) \in \overset{q}{\underset{k=1}{ \prod}}I_{k,i_k}.\]
Let $\alpha \in  \mathcal{C}(I^q)$ be such that $\|\alpha\|_{\infty}<1.$ Assume further that $s \in  \mathcal{C}(I^q)$ satisfies \[s(x_{1,j_1},\ldots,x_{q,j_q})=f(x_{1,j_1},\ldots,x_{q,j_q}) \text{ for all } (j_1, \ldots,j_q) \in
\overset{q}{\underset{k=1}{ \prod}} \partial  \Sigma_{M_k,0}.\] 
For each $(i_1, \dots, i_q) \in \overset{q}{\underset{k=1}{ \prod}}  \Sigma_{M_{k}}$, define $\mathcal{H}_{i_{1}\dots i_{q}}: K \rightarrow \mathbb{R}$ as follows
\begin{align}
 &\mathcal{H}_{i_1 \dots i_q}(y_1, \dots, y_q, z) \nonumber \\
 =&f\left(u_{1,i_1}(y_1), \dots, u_{q,i_{q}}(y_q)\right)+\alpha\left(u_{1,i_1}(y_1), \dots, u_{q,i_{q}}(y_q)\right) \left(z-s(y_1, \dots, y_q)\right),\label{falpha}
 \end{align}
 Then, $W_{i_1 \dots i_q}: K \rightarrow \overset{q}{\underset{k=1}{ \prod}}I_{k,i_k}\times \mathbb{R}$ defined by 
\[
W_{i_1 \dots i_q}(x_1, \dots, x_q, z)=\left(u_{1,i_1}(x_1), \dots, u_{q,i_{q}}(x_q), \mathcal{H}_{i_1 \dots i_q}(x_1, \dots, x_q, z)\right),
\]
is an IFS on $K$ and notice that $\mathcal{H}_{i_1 \dots i_q}$ satisfies the conditions of Theorem \ref{matching}, therefore we have the following:
\begin{theorem}
 Let $\left\{K, W_{i_1 \dots i_q}:(i_1, \dots, i_q) \in \overset{q}{\underset{k=1}{ \prod}}  \Sigma_{M_{k}}\right\}$ be the IFS. Then, there exists a unique continuous function $f^{\alpha}_{\Delta,s}: I^q \rightarrow$ $\mathbb{R}$ such that $f^{\alpha}_{\Delta,s}\left(x_{1,i_1},\dots,x_{q.i_q}\right)=f\left(x_{1,i_1},\dots,x_{q.i_q}\right)$ for all $(i_1, \dots, i_q) \in \overset{q}{\underset{k=1}{ \prod}} \Sigma_{ M_{k},0}$ and $G= \bigcup\left\{W_{i_1\cdots i_q}(G): (i_1,\ldots,i_q)\in \overset{q}{\underset{k=1}{ \prod}} \Sigma_{M_k,0}\right\}, $ where $\mathcal{G}(f^{\alpha}_{\Delta,s})=G$.
\end{theorem}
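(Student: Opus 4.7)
The plan is to verify that the family of maps $\{\mathcal{H}_{i_1\cdots i_q}\}$ defined by \eqref{falpha} satisfies every hypothesis of Theorem \ref{matching} and then to invoke that theorem with the data values $z_{(1,i_1)\cdots(q,i_q)} := f(x_{1,i_1},\ldots,x_{q,i_q})$. There are three items to check: (a) $\mathcal{H}_{i_1\cdots i_q}$ sends each corner of $I^q$ equipped with its prescribed $z$-value to the correct image node dictated by $\eta$; (b) $\mathcal{H}_{i_1\cdots i_q}$ is uniformly Lipschitz in the last variable with constant strictly less than $1$; and (c) the matching condition holds along every shared face of adjacent subboxes.

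For (a), fix $(k_1,\ldots,k_q)\in\prod_{j=1}^q\partial\Sigma_{M_j,0}$. The hypothesis on $s$ gives $s(x_{1,k_1},\ldots,x_{q,k_q})=f(x_{1,k_1},\ldots,x_{q,k_q})=z_{(1,k_1)\cdots(q,k_q)}$, so the last term in \eqref{falpha} vanishes when $z=z_{(1,k_1)\cdots(q,k_q)}$. A direct check of \eqref{affine} shows $u_{k,i_k}(x_{k,k_k})=x_{k,\eta(i_k,k_k)}$ for $k_k\in\{0,M_k\}$, hence
\[
\mathcal{H}_{i_1\cdots i_q}\bigl(x_{1,k_1},\ldots,x_{q,k_q},z_{(1,k_1)\cdots(q,k_q)}\bigr)=f\bigl(x_{1,\eta(i_1,k_1)},\ldots,x_{q,\eta(i_q,k_q)}\bigr)=z_{(1,\eta(i_1,k_1))\cdots(q,\eta(i_q,k_q))}.
\]
For (b), only the factor $\alpha(u_{1,i_1}(y_1),\ldots,u_{q,i_q}(y_q))\,z$ in \eqref{falpha} depends on $z$, yielding Lipschitz constant at most $\|\alpha\|_\infty<1$.

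The crux is (c). Fix $k\in\Sigma_q$, $i_k\in\text{int}\Sigma_{M_k,0}$, and set $x_k^{*}=u_{k,i_k}^{-1}(x_{k,i_k})=u_{k,i_k+1}^{-1}(x_{k,i_k})$. Then $u_{k,i_k}(x_k^{*})=x_{k,i_k}=u_{k,i_k+1}(x_k^{*})$, so the arguments fed into $f$ and into $\alpha$ in \eqref{falpha} are identical regardless of whether we use the $k$-th index $i_k$ or $i_k+1$. Crucially, the subtracted term $s(y_1,\ldots,x_k^{*},\ldots,y_q)$ carries no dependence on $(i_1,\ldots,i_q)$ whatsoever, so it coincides on both sides. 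Therefore
\[
\mathcal{H}_{i_1\cdots i_{k-1}\,i_k\,i_{k+1}\cdots i_q}(y_1,\ldots,x_k^{*},\ldots,y_q,z)=\mathcal{H}_{i_1\cdots i_{k-1}\,i_k+1\,i_{k+1}\cdots i_q}(y_1,\ldots,x_k^{*},\ldots,y_q,z),
\]
which is precisely the matching condition.

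With (a)--(c) established, Theorem \ref{matching} applied to the IFS $\{K,W_{i_1\cdots i_q}\}$ produces a unique continuous $f^{\alpha}_{\Delta,s}:I^q\to\mathbb{R}$ taking the value $f(x_{1,i_1},\ldots,x_{q,i_q})$ at every grid node and whose graph $G$ equals the unique attractor $\bigcup W_{i_1\cdots i_q}(G)$ of the IFS. No step poses a serious obstacle; the only mild subtlety lies in (c), where one must notice that the choice of $s$ as a single function on $I^q$ (rather than separate pieces per subbox) is exactly what makes the matching condition free of charge.
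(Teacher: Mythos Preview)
Your proof is correct and follows exactly the approach the paper intends: the paper itself does not supply a separate proof of this theorem but simply remarks that ``$\mathcal{H}_{i_1\cdots i_q}$ satisfies the conditions of Theorem~\ref{matching}'' and states the result, whereas you have carefully filled in the verification of the corner, Lipschitz, and matching conditions. Your check of (c)---observing that $u_{k,i_k}(x_k^{*})=x_{k,i_k}=u_{k,i_k+1}(x_k^{*})$ and that $s$ carries no subbox index---is precisely the content the paper leaves implicit.
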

Now define a RB Operator, $T:\mathcal{C}(I^q)\rightarrow \mathcal{C}(I^q)$ such that 
\[T(h)(x_1,\ldots, x_q)=\mathcal{H}_{i_1 \dots i_q}\left(u_{1,i_1}^{-1}(x_1),\dots,u_{q,i_q}^{-1}(x_q), h\left(u_{1,i_1}^{-1}(x_1),\ldots,u_{q,i_q}^{-1}(x_q)\right)\right),\]
then there exists a fixed point of this operator, $f^{\alpha}_{\Delta,s}$ known as $\alpha$-fractal function and it satisfies the following self-referential equation,
\[f^{\alpha}_{\Delta,s}(x_1, \dots, x_q)=\mathcal{H}_{i_1 \dots i_q}\left(u_{1,i_1}^{-1}(x_1),\dots,u_{q,i_q}^{-1}(x_q), f^{\alpha}_{\Delta,s}\left(u_{1,i_1}^{-1}(x_1),\ldots,u_{q,i_q}^{-1}(x_q)\right)\right)\]
for all $(x_1, \dots, x_q) \in \overset{q}{\underset{k=1}{ \prod}}I_{k,i_k}.$
That is,
 \begin{equation}\label{alphafrac}
 \begin{aligned}
   &f^{\alpha}_{\Delta,s}(x_1,\ldots,x_q)\\
   =& f(x_1,\ldots,x_q)+\alpha\left(u_{1,i_1}^{-1}(x_1),\ldots,u_{q,i_q}^{-1}(x_q)\right)\left[ f^{\alpha}_{\Delta,s}\left(u_{1,i_1}^{-1}(x_1),\ldots,u_{q,i_q}^{-1}(x_q)\right)\right.\\
   &\hspace{5cm}\left.-s\left(u_{1,i_1}^{-1}(x_1),\ldots,u_{q,i_q}^{-1}(x_q)\right)\right]
   \end{aligned}
\end{equation}
for $(x_1,\dots,x_q) \in \overset{q}{\underset{k=1}{ \prod}} I_{k,i_k}.$

\noindent
$\bullet$ Denote $f^{\alpha}_{\Delta,s}$ as $f^{\alpha}$ if there is no ambiguity.\\
$\bullet$ For our convenience, write the metric \[d_{I^q}((x_1,\dots,x_q),(y_1,\dots,y_q)):=\sqrt{(x_1-y_1)^2+\cdots+(x_q-y_q)^2},\]
where$(x_1,\dots,x_q), ~ (y_1,\dots,y_q) \in I^q.$

\begin{definition}
    A function $f: I^q \rightarrow \mathbb{R}$ is said to be H\"{o}lder continuous with exponent $\sigma$ and H\"{o}lder constant $K_f$ if 
    \[ |f(x_1,\dots,x_q)-f(y_1,\dots,y_q)| \le K_f d_{I^q}((x_1,\dots,x_q),(y_1,\dots,y_q))^\sigma\]
    for all $(x_1,\dots,x_q),(y_1,\dots,y_q)\in I^q, \text{and for some}~ K_f > 0.$\\
    Define the H\"{o}lder space 
\[ \mathcal{H}^{\sigma}(I^q ) := \{g:I^q \rightarrow \mathbb{R}: ~\text{g is H\"{o}lder continuous with exponent}~\sigma \} .\]
       If we equip the space $\mathcal{H}^{\sigma}(I^q)$ with norm $ \|g\|:= \|g\|_{\infty} +[g]_{\sigma}$ where
       \[[g]_{\sigma} = \sup_{(x_1,\dots,x_q)\ne (y_1,\dots,y_q)} \frac{|g(x_1,\dots,x_q)-g(y_1,\dots,y_q)|}{d_{I^q}((x_1,\dots,x_q),(y_1,\dots,y_q))^{\sigma}},\]
       then it forms a Banach space.
    \end{definition}

\section{Box dimension of the graph of multivariate fractal function} \label{Bdm}
By estimating the oscillation of functions, we shall seek to compute the box dimension of the graph of the multivariate fractal function. The main ideas in \cite{ MF4, Ruan2} are modified and applied in our method.
\begin{lemma} \label{SDBl}
      Let $f : I^q \rightarrow \mathbb{R}$ be continuous on $I^q.$ Then, $\dim_H \big(\mathcal{G}(f)\big) \ge q.$
\end{lemma}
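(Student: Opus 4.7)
The plan is to exploit the fact that the coordinate projection onto the first $q$ coordinates is a Lipschitz surjection from $\mathcal{G}(f)$ onto $I^q$, and that Lipschitz maps cannot increase Hausdorff dimension. Continuity of $f$ is not actually used for the dimension bound itself; it only ensures the graph is a well-defined subset of $\R^{q+1}$ on which standard measure-theoretic machinery applies.

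Concretely, I would define $\pi: \R^{q+1} \to \R^q$ by $\pi(x_1,\dots,x_q,z) = (x_1,\dots,x_q)$. For any two points $(x, f(x))$ and $(y, f(y))$ in $\mathcal{G}(f)$ one has
\[
\|\pi(x,f(x)) - \pi(y,f(y))\|_2 = \|x-y\|_2 \le \sqrt{\|x-y\|_2^2 + (f(x)-f(y))^2},
\]
so $\pi$ restricted to $\mathcal{G}(f)$ is $1$-Lipschitz. Moreover $\pi(\mathcal{G}(f)) = I^q$ since $f$ is defined on all of $I^q$. I would then invoke the standard fact (see Falconer \cite{Fal}) that if $g: X \to Y$ is Lipschitz with constant $L$ then for every $s \ge 0$, $H^s(g(X)) \le L^s H^s(X)$, and consequently $\dim_H(g(X)) \le \dim_H(X)$.

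Applying this to $\pi|_{\mathcal{G}(f)}$ gives
\[
q = \dim_H(I^q) = \dim_H\!\big(\pi(\mathcal{G}(f))\big) \le \dim_H\!\big(\mathcal{G}(f)\big),
\]
where I use the well-known evaluation $\dim_H(I^q) = q$. There is no real obstacle here; the only point worth being explicit about is that $\dim_H(I^q) = q$, which follows from $0 < H^q(I^q) < \infty$ (up to a constant, $H^q$ agrees with $q$-dimensional Lebesgue measure on $\R^q$), and this is taken as standard.
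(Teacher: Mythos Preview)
Your proof is correct and follows essentially the same approach as the paper: both arguments use the coordinate projection from $\mathcal{G}(f)$ onto $I^q$, observe that it is Lipschitz and surjective, and then invoke the fact that Lipschitz maps do not increase Hausdorff dimension together with $\dim_H(I^q)=q$. Your version is slightly more explicit in verifying the Lipschitz constant and the value of $\dim_H(I^q)$, but the argument is the same.
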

      \begin{proof}
      Define a Lipschitz function $T : \mathcal{G}(f) \rightarrow I^q $ by $T\big((x_1,\ldots, x_q,f(x_1,\ldots, x_q))\big)=(x_1,\ldots, x_q).$  Recall that if $f:A\subseteq \mathbb{R}^n \to \mathbb{R}^m$ is a Lipschitz function,  then $\dim_{H}\big( f(A) \big) \le \dim_{H}(A).$ Hence, we get $\dim_H \big(T(\mathcal{G}(f))\big) \le \dim_H\big(\mathcal{G}(f)\big).$ It is straightforward to verify that the function $T$ is surjective and thus the conclusion.
      \end{proof}
    \begin{definition}
    Let $A \subset \mathbb{R}^q$ be a closed bounded hypercube and $f: A \to \mathbb{R}$. The oscillation of  $f$ over the rectangle $A$ is defined as
    \[ Osc_f[A]= \sup_{(x_1,\ldots, x_q),(y_1,\ldots, y_q) \in A} |f(x_1,\ldots, x_q)-f(y_1,\ldots, y_q)|.\]
    \end{definition}
    Next, we shall provide a multivariate analogue of Proposition $11.1$  in Falconer \cite{Fal}.
    \begin{lemma} \label{BBVL2}
    Let $f :I^q \rightarrow \mathbb{R}$ be a continuous function. For $M_1< \ldots < M_q < M $, let us take  cuboid of size $  \frac{1}{M_1^m} \times \dots \times   \frac{1}{M_q^m} \times  \frac{1}{M^m} $ for some $M_k,M \ge 2$, where $1\le k \le q$ and for some $m \in \mathbb{N}.$ Let $ \delta= \frac{1}{M^m}$ be the height of the cuboid. If $N_{\delta}(\mathcal{G}(f))$ is the number of $\delta$-cuboid that intersect the graph of $f,$  then
    \begin{align*}
       \sum_{i_q=1}^{M_q^m}\dots \sum_{i_1=1}^{M_1^m} \max\Big\{ 1, \left \lceil { M^m Osc_f[A_{i_{1}\dots i_{q}}]} \right \rceil \Big\} \le& N_{\delta}(\mathcal{G}(f))\\
       \le& [M_1 \dots M_q]^m  + \sum_{i_1=1}^{M_1^m} \dots \sum_{i_q=1}^{M_q^m}  \left \lceil { M^m Osc_f[A_{i_1\dots\i_q}]} \right \rceil, 
    \end{align*}
 where $\left \lceil {.}\right \rceil $ denotes the ceiling function.
    \end{lemma}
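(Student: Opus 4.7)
The plan is to adapt Falconer's classical one-dimensional oscillation argument (Proposition 11.1 in \cite{Fal}) to the $q$-dimensional setting by slicing the hypercube $I^q$ column by column. I begin by partitioning the base $I^q$ into the grid of closed sub-cuboids $A_{i_1\ldots i_q}=\prod_{k=1}^q\big[(i_k-1)/M_k^m,\;i_k/M_k^m\big]$, with $i_k\in\{1,\dots,M_k^m\}$. Together with the vertical scale $\delta=1/M^m$, these generate the covering $\delta$-cuboids. For each fixed multi-index $(i_1,\dots,i_q)$ I consider the ``column'' $A_{i_1\ldots i_q}\times\mathbb{R}$ and estimate separately how many $\delta$-cuboids sitting in this column are needed to cover the portion of $\mathcal{G}(f)$ lying above $A_{i_1\ldots i_q}$; the global estimates then follow by summing over the $(M_1\cdots M_q)^m$ columns.

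The key local observation is that, since $f$ is continuous on the compact connected set $A_{i_1\ldots i_q}$, its image is a closed interval of length exactly $Osc_f[A_{i_1\ldots i_q}]$. To bound $N_\delta$ from below, note that the vertical extent of the graph over this column is at least $Osc_f[A_{i_1\ldots i_q}]$, so the number of vertical slabs of height $\delta$ needed to cover it is at least $\lceil M^m\,Osc_f[A_{i_1\ldots i_q}]\rceil$; but the graph is non-empty above every column, so at least one slab is always required. This gives the per-column lower bound $\max\{1,\lceil M^m Osc_f[A_{i_1\ldots i_q}]\rceil\}$, and summing over all multi-indices yields the left-hand inequality. For the upper bound, an interval of length $L$ is always contained in the union of at most $\lceil L/\delta\rceil+1$ consecutive vertical slabs of height $\delta$ (one potential extra slab at each endpoint due to grid misalignment, absorbed into a single $+1$ after using $\lceil L/\delta\rceil\ge 0$). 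Applied to $L=Osc_f[A_{i_1\ldots i_q}]$, this gives at most $\lceil M^m Osc_f[A_{i_1\ldots i_q}]\rceil+1$ cuboids per column; summing the constant $1$ over all $(M_1\cdots M_q)^m$ columns produces the leading $[M_1\cdots M_q]^m$ term and leaves the sum of ceilings.

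The routine steps are the counting bookkeeping; the only delicate point is the upper bound, where I must be careful that $+1$ per column (rather than $+2$) really suffices. The cleanest way is to observe that the image interval $[\min f|_{A_{i_1\ldots i_q}},\,\max f|_{A_{i_1\ldots i_q}}]$ of length $L$ is contained in $[k\delta,(k+r)\delta]$ for the unique integer $k$ with $k\delta\le \min f|_{A_{i_1\ldots i_q}}$ and $r=\lceil L/\delta\rceil+[(\min f|_{A_{i_1\ldots i_q}}-k\delta)+L>\lceil L/\delta\rceil\delta]$, which is at most $\lceil L/\delta\rceil+1$; hence $r$ cuboids suffice in that column. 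This is the main (and essentially the only) obstacle in the argument; once it is in place, combining the two per-column estimates and summing over $(i_1,\dots,i_q)\in\prod_{k=1}^q\{1,\dots,M_k^m\}$ produces exactly the sandwich inequality in the statement.
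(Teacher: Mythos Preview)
Your proposal is correct and follows essentially the same approach as the paper: both argue column by column, use continuity of $f$ on each $A_{i_1\ldots i_q}$ to conclude the image is an interval, and then observe that the number of $\delta$-cuboids in that column meeting the graph lies between $\max\{1,\lceil M^m Osc_f[A_{i_1\ldots i_q}]\rceil\}$ and $1+\lceil M^m Osc_f[A_{i_1\ldots i_q}]\rceil$, after which summing over the $(M_1\cdots M_q)^m$ columns gives the stated sandwich. Your write-up is in fact more careful than the paper's one-line proof, particularly in justifying why a single $+1$ (rather than $+2$) per column suffices in the upper bound.
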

    \begin{proof}
Using the continuity of $f$, it follows that the number of $\delta$-cuboids in the part above $A_{i_1 \dots i_q}$ that intersect the graph of $f$ is at least $\max\Big\{ 1, \left \lceil {M^m Osc_f[A_{i_1 \dots i_q}]} \right \rceil \Big\} $ and at most  $1+ \left \lceil {M^m Osc_f[A_{i_1 \dots i_q}]}\right \rceil$. The appropriate bounds are obtained by summing all of these components.
\end{proof}
Now we provide a few concerned most fundamental concepts and notations. We shall assume for the purpose of simplicity $I= I_k=[0,1]$, where $k \in \Sigma_{q}$.\\
 Let $\left\{(x_{1, i_1}, \dots, x_{q,i_q}) \in \mathbb{R}^q : i_k\in \Sigma_{M_k, 0}, k \in \Sigma_{q}\right\}$ such that $0=x_{k,0}< x_{k,1} < \dots <x_{k, M_k}=1$  with  $x_{k,i_k} - x_{k,i_k-1}=  \frac{1}{M_k}, \text{ where } k \in  \Sigma_{q} $. Further, we denote
 \begin{align*}
 & D_{i_1 \dots i_q}= u_{1,i_1}(I_1) \times \dots \times u_{q,i_q} (I_q), \quad D_{(i_1)_1 \dots (i_1)_{m}  \dots (i_q)_1 \dots(i_q)_{m} }= u_{(i_1)_1 \dots (i_1)_{m} }(I_1) \times \dots \times u_{(i_q)_1 \dots(i_q)_{m}}(I_q),\\ 
 & u_{(i_k)_1 \dots (i_k)_{m} }=u_{k,(i_k)_1} \circ \dots \circ u_{k,(i_k)_m},~ \text{for}~ 1\le k \le q,
 \end{align*}
 and define $ G_{i_1 \dots i_q}:=\{(x_1, \dots, x_q,f^{\alpha}(x_1, \dots, x_q)): (x_1, \dots, x_q ) \in D_{i_1 \dots i_q} \}$ and
 \begin{equation*}
 \begin{aligned}
 \overline{\alpha}_{i_1 \dots i_q} =&~ \max\{|\alpha(x_1, \dots, x_q)|:(x_1, \dots, x_q ) \in D_{i_1 \dots i_q} \},\\ \alpha_{\min}= & \min\{|\alpha(x_1, \dots, x_q)|:(x_1, \dots, x_q ) \in I^q\}, \\\alpha_{\max}= & \max\{|\alpha(x_1, \dots, x_q)|:(x_1, \dots, x_q ) \in I^q\}.
    \end{aligned}
 \end{equation*}
    \begin{remark}
     Let $ \overline{\gamma}= \overset{M_1,\dots,M_q}{\underset{i_{1},\dots,i_{q}=1}{\sum}} \overline{\alpha}_{i_{1},\dots, i_{q}}.$ For $m \in \mathbb{N},$ we have 
     \[\overline{\gamma}^m= \sum_{(i_{k,1}\dots, i_{k,q}) \in  \Sigma_{M_{1}} \times \dots \times  \Sigma_{M_{q}}} \overline{\alpha}_{i_{m,1},\dots, i_{m,q}} \dots \overline{\alpha}_{i_{1,1},\dots, i_{1,q}}.\]
    \end{remark}
    \begin{definition}
    A function $f: I^{q} \rightarrow \mathbb{R}$ is said to be H\"{o}lder continuous with exponent $\sigma$ and H\"{o}lder constant $K_f$ if 
    \[ |f(x_{1}, \dots, x_{q})-f(y_1,\dots,y_q)| \le K_f \|(x_{1}, \dots, x_{q})-(y_1,\dots,y_q)\|_2^\sigma \text{ for all } (x_{1}, \dots, x_{q}),(y_1,\dots,y_q) \in I^q, \] 
    and for some $K_f > 0.$
    \end{definition}
    \begin{lemma}\label{lemma3}
 $\alpha$-fractal function $f^\alpha$ is constructed using $f, \alpha$ and $s$ that are H\"older continuous, corresponding to these consider the $ \sigma_1, \sigma_2, \sigma_3$ as an exponents and $K_f, K_\alpha, K_s$ as a H\"older constant  respectively.
     Furthermore, we suppose that $M_{q}<\dots <M_{1}.$  Then, we have
     \begin{equation}
    \begin{aligned}
       Osc_{F_{(i_{1})_m\dots  (i_{q})_m}}[G_{(i_1)_1 \dots (i_1)_{m-1}  \dots (i_q)_1 \dots(i_q)_{m-1} }] \le& \overline{\alpha}_{(i_{1})_m\dots  (i_{q})_m} Osc_{f^{\alpha}}[D_{(i_1)_1 \dots (i_1)_{m-1}  \dots (i_q)_1 \dots(i_q)_{m-1} }] \\
     &+\left( \frac{K^*}{M_{1}^{m \sigma}} + \frac{\overline{\alpha}_{(i_{1})_m\dots  (i_{q})_m} K_s}{M_{1}^{(m-1) \sigma}} \right)q^{\sigma/2},          
     \end{aligned}
     \end{equation}
      where  $\sigma=\min\{\sigma_1, \sigma_2,\sigma_3\}$ and $K^*:= K_\alpha\big(\|f\|_\infty+ \|s\|_\infty\big)+K_f$.
    \end{lemma}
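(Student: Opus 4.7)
The plan is to start from the explicit form of $\mathcal{H}_{(i_1)_m\cdots(i_q)_m}$ in equation \eqref{falpha}, evaluate it at two arbitrary points of $G_{(i_1)_1\cdots(i_1)_{m-1}\cdots(i_q)_1\cdots(i_q)_{m-1}}$, split the resulting difference into four pieces by a telescoping identity, and control each piece separately using the H\"older hypotheses on $f,\alpha,s$ together with the contraction factors of the maps $u_{k,i_k}$. The piece that couples to $\text{Osc}_{f^\alpha}[D_{\cdots}]$ must appear naturally as the coefficient of $(z-w)$, while the remaining three pieces should collectively yield the additive error $\bigl(\tfrac{K^*}{M_1^{m\sigma}} + \tfrac{\overline{\alpha}_{\cdots} K_s}{M_1^{(m-1)\sigma}}\bigr)q^{\sigma/2}$.

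Fix two points $(x_1,\ldots,x_q,z)$ and $(y_1,\ldots,y_q,w)$ in $G_{(i_1)_1\cdots(i_q)_{m-1}}$, so that $z=f^\alpha(x_1,\ldots,x_q)$, $w=f^\alpha(y_1,\ldots,y_q)$, and both base points lie in $D_{(i_1)_1\cdots(i_q)_{m-1}}$. Writing $\tilde x_k := u_{k,(i_k)_m}(x_k)$ and $\tilde y_k := u_{k,(i_k)_m}(y_k)$, a direct expansion gives
\begin{align*}
&\mathcal{H}_{(i_1)_m\cdots(i_q)_m}(x,z) - \mathcal{H}_{(i_1)_m\cdots(i_q)_m}(y,w) \\
&\qquad = \bigl[f(\tilde x)-f(\tilde y)\bigr] + \alpha(\tilde x)(z-w) + \alpha(\tilde x)\bigl[s(y)-s(x)\bigr] + \bigl[\alpha(\tilde x)-\alpha(\tilde y)\bigr]\bigl[w-s(y)\bigr].
\end{align*}
Taking absolute values, these four terms are respectively bounded by $K_f\|\tilde x-\tilde y\|_2^{\sigma_1}$, $\overline{\alpha}_{(i_1)_m\cdots(i_q)_m}|z-w|$, $\overline{\alpha}_{(i_1)_m\cdots(i_q)_m}K_s\|x-y\|_2^{\sigma_3}$, and $K_\alpha\|\tilde x-\tilde y\|_2^{\sigma_2}\cdot|w-s(y)|$, where in the last factor I would use the uniform bound $|w-s(y)|\le\|f\|_\infty+\|s\|_\infty$ (this is the source of the $K_\alpha(\|f\|_\infty+\|s\|_\infty)$ contribution to $K^*$).

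Next I push through the contraction estimates. Since $(x,y)\in D_{(i_1)_1\cdots(i_q)_{m-1}}$ has $k$-th edge length $M_k^{-(m-1)}$, one obtains $|x_k-y_k|\le M_k^{-(m-1)}$ and $|\tilde x_k-\tilde y_k|\le M_k^{-m}$; rolling the coordinate bounds into the Euclidean norm, and invoking the ordering hypothesis to replace every $M_k$ by the appropriate extremal $M_1$, one gets $\|\tilde x-\tilde y\|_2\le\sqrt{q}\,M_1^{-m}$ and $\|x-y\|_2\le\sqrt{q}\,M_1^{-(m-1)}$. Setting $\sigma:=\min\{\sigma_1,\sigma_2,\sigma_3\}$ and absorbing the various exponents into this worst case, the first and fourth terms combine into $\bigl(K_f+K_\alpha(\|f\|_\infty+\|s\|_\infty)\bigr)q^{\sigma/2}M_1^{-m\sigma}=K^* q^{\sigma/2}M_1^{-m\sigma}$, the third contributes $\overline{\alpha}_{(i_1)_m\cdots(i_q)_m}K_s q^{\sigma/2}M_1^{-(m-1)\sigma}$, and supremising $|z-w|$ over $D_{(i_1)_1\cdots(i_q)_{m-1}}$ converts the second piece into $\overline{\alpha}_{(i_1)_m\cdots(i_q)_m}\,\text{Osc}_{f^\alpha}[D_{(i_1)_1\cdots(i_q)_{m-1}}]$.

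The main bookkeeping obstacle is not any single estimate but the coordinated handling of the three distinct H\"older exponents alongside the two different contraction depths ($m$ versus $m-1$): one must consolidate to the worst exponent $\sigma$ while keeping the $M_1^{-m\sigma}$ and $M_1^{-(m-1)\sigma}$ summands paired with the correct prefactors, so that $K^*$ collects exactly the two contributions $K_f$ and $K_\alpha(\|f\|_\infty+\|s\|_\infty)$ and nothing is double-counted. Taking the supremum over all pairs of points in $G_{(i_1)_1\cdots(i_q)_{m-1}}$ then completes the proof.
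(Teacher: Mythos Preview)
Your proposal is correct and follows essentially the same route as the paper: expand $\mathcal{H}_{(i_1)_m\cdots(i_q)_m}$, split the difference into the same four telescoping pieces, bound each via the H\"older hypotheses and the contraction ratios of the $u_{k,i_k}$, and consolidate to the worst exponent $\sigma$. The paper first carries out the $m=1$ case over $I^q$ and then invokes ``further iterations'' for general $m$, whereas you go directly to level $m$; the substance is identical. One small slip to fix: since $w=f^\alpha(y)$, your bound $|w-s(y)|\le\|f\|_\infty+\|s\|_\infty$ should read $\|f^\alpha\|_\infty+\|s\|_\infty$, which is what the paper's proof actually uses (the $\|f\|_\infty$ in the stated $K^*$ appears to be a typo there).
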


    \begin{proof}
    Recall that 
    \begin{equation}
    \begin{aligned}
F_{i_{1} \dots i_{q}}(x_{1}, \dots, x_{q},z) =&\alpha \big( u_{1,i_{1}}(x_1),\dots, u_{q, i_{q}}(x_q)\big)z+f\big(u_{1, i_{1}}(x_1),\dots, u_{q, i_{q}}(x_q)\big)\\ 
   &- \alpha \big(u_{1, i_{1}}(x_1),\dots, u_{q, i_{q}}(x_q)\big)s(x_1,\dots, x_q).
    \end{aligned}
    \end{equation}
    Let $(x_{1}, \dots, x_{q}) , (y_{1}, \dots, y_{q}) \in I^{q} .$ We have
    \begin{align*}
          &\big|F_{(i_{1})_1 \dots  (i_{q})_1}\big(x_{1}, \dots, x_{q}, f^{\alpha}(x_{1}, \dots, x_{q})\big)- F_{(i_{1})_1\dots  (i_{q})_1}\big(y_{1}, \dots, y_{q}, f^{\alpha}(y_{1}, \dots, y_{q})\big) \big| \\
          =~ & \big| \alpha \big( u_{1,(i_{1})_1}(x_1),\dots, u_{q,(i_{q})_1}(x_q)\big)f^{\alpha}(x_{1}, \dots, x_{q})+f\big( u_{1,(i_{1})_1}(x_1),\dots, u_{q,(i_{q})_1}(x_q)\big)\\
          &- \alpha\big( u_{1,(i_{1})_1}(x_1),\dots, u_{q,(i_{q})_1}(x_q)\big)s(x_1,\dots,x_q)  - \alpha\big( u_{1,(i_{1})_1}(y_1),\dots, u_{q,(i_{q})_1}(y_q)\big)f^{\alpha}(y_{1}, \dots, y_{q})\\
          &+f\big( u_{1,(i_{1})_1}(y_1),\dots, u_{q,(i_{q})_1}(y_q)\big)- \alpha\big( u_{1,(i_{1})_1}(y_1),\dots, u_{q,(i_{q})_1}(y_q)\big)s(y_{1}, \dots, y_{q})\big|\\
          \le ~& \big| \alpha\big( u_{1,(i_{1})_1}(x_1),\dots, u_{q,(i_{q})_1}(x_q)\big) \big| |f^{\alpha}(x_{1}, \dots, x_{q}) - f^{\alpha}(y_{1}, \dots, y_{q})|\\
       \end{align*}
     \begin{align*}
&+\big| \alpha\big( u_{1,(i_{1})_1}(x_1),\dots, u_{q,(i_{q})_1}(x_q)\big) \big| |s(x_{1}, \dots, x_{q}) - s(y_{1}, \dots, y_{q})| \\&+
          \big(|f^{\alpha}(y_{1}, \dots, y_{q})|+|s(y_{1}, \dots, y_{q})| \big) \big|\alpha\big( u_{1,(i_{1})_1}(x_1),\dots, u_{q,(i_{q})_1}(x_q)\big) 
          - \alpha\big( u_{1,(i_{1})_1}(y_1),\dots, u_{q,(i_{q})_1}(y_q)\big)  \big|\\&+
          \big|f\big( u_{1,(i_{1})_1}(x_1),\dots, u_{q,(i_{q})_1}(x_q)\big)- f\big( u_{1,(i_{1})_1}(y_1),\dots, u_{q,(i_{q})_1}(y_q)\big) \big|\\
          \le~ & \overline{\alpha}_{(i_{1})_1\dots  (i_{q})_1} |f^{\alpha}(x_{1}, \dots, x_{q}) - f^{\alpha}(y_{1}, \dots, y_{q})|\\ &+ \overline{\alpha}_{(i_{1})_1\dots  (i_{q})_1} K_s \|(x_{1}, \dots, x_{q})-(y_{1}, \dots, y_{q})\|_2^{\sigma_3}\\&+
         K_{\alpha} \big(\|f^{\alpha}\|_{\infty}+\|s\|_{\infty} \big) \big\|\big( u_{1,(i_{1})_1}(x_1),\dots, u_{q,(i_{q})_1}(x_q)\big) - \big( u_{1,(i_{1})_1}(y_1),\dots, u_{q,(i_{q})_1}(y_q)\big)  \big\|_2^{\sigma_2}\\&+
         K_f \big\|\big( u_{1,(i_{1})_1}(x_1),\dots, u_{q,(i_{q})_1}(x_q)\big) - \big( u_{1,(i_{1})_1}(y_1),\dots, u_{q,(i_{q})_1}(y_q)\big)  \big\|_2^{\sigma_1}\\
        \le~ & \overline{\alpha}_{(i_{1})_1\dots  (i_{q})_1} Osc_{f^{\alpha}}[I^{q}]\\
         &+\overline{\alpha}_{(i_{1})_1\dots  (i_{q})_1} K_s\|(x_{1}, \dots, x_{q})-(y_{1}, \dots, y_{q})\|_2^{\sigma}\\&+ \Big(K_{\alpha} \big(\|f^{\alpha}\|_{\infty}+\|s\|_{\infty} \big)+ K_f\Big) \max\left\{\frac{1}{M_{q}^\sigma},\dots, \frac{1}{M_{1}^\sigma}\right\} \|(x_{1}, \dots, x_{q})-(y_{1}, \dots, y_{q})\|_2^{\sigma}\\
        \le~ & \overline{\alpha}_{(i_{1})_1\dots  (i_{q})_1} Osc_{f^{\alpha}}[I^{q}]+ \Big(\overline{\alpha}_{(i_{1})_1\dots  (i_{q})_1} K_s+\frac{K^*}{M_{1}^\sigma}\Big) q^{\sigma/2}.
\end{align*}

  Further iterations lead to the following result:
    \begin{equation}
        \begin{aligned}
          &Osc_{F_{(i_{1})_m\dots  (i_{q})_m}}[G_{(i_1)_1 \dots (i_1)_{m-1} (i_2)_1  \dots (i_2)_m \dots (i_q)_1 \dots(i_q)_{m-1 }}] \\
          \le & \overline{\alpha}_{(i_{1})_m\dots  (i_{q})_m} Osc_{f^{\alpha}}[D_{(i_1)_1 \dots (i_1)_{m-1} (i_2)_1  \dots (i_2)_m \dots (i_q)_1 \dots(i_q)_{m-1} }]+\left(\frac{K^*}{M_{1}^{m \sigma}}+ \frac{\overline{\alpha}_{(i_{1})_m\dots  (i_{q})_m} K_s}{M_{1}^{(m-1) \sigma}} \right)q^{\sigma/2},           
        \end{aligned}
    \end{equation}
completing the claim.
    \end{proof} The following theorem establishes bounds for the graph dimension of the multivariate $\alpha$-fractal function $f^{\alpha}.$
    \begin{theorem}\label{Th4.10}    
    Let $f^{\alpha}$ be the multivariate $\alpha$-fractal function constructed previously.
     \begin{enumerate}[(i)]
     \item If $ \alpha_{\min} >  \frac{1}{M_{1}^\sigma},$ then~
     $q \le \dim_H \big(\mathcal{G}(f^{\alpha})\big) \le \underline{\dim}_B \big(\mathcal{G}(f^{\alpha})\big) \le \overline{\dim}_B\big(\mathcal{G}(f^{\alpha})\big) \le 1+  \frac{\log (\overline{\gamma})}{\log (M)}.$
     \item If $  \frac{1}{M_{1}^\sigma} < \alpha_{\max} < 1,$ then
     $q \le \dim_H \big(\mathcal{G}(f^{\alpha})\big) \le \underline{\dim}_B \big(\mathcal{G}(f^{\alpha})\big) \le \overline{\dim}_B\big(\mathcal{G}(f^{\alpha})\big) \le q+1+  \frac{\log (\alpha_{\max})}{\log (M)}.$

     \item  If $ \alpha_{\max} \le  \frac{1}{M_{1}^\sigma},$ then
     $q \le \dim_H \big(\mathcal{G}(f^{\alpha})\big) \le \underline{\dim}_B \big(\mathcal{G}(f^{\alpha})\big) \le \overline{\dim}_B\big(\mathcal{G}(f^{\alpha})\big) \le q+1- \sigma.$
     \end{enumerate}
    \end{theorem}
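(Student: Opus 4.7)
My plan divides the proof into a uniform lower bound and three case-by-case upper bounds. The chain $q \le \dim_H\big(\mathcal{G}(f^\alpha)\big) \le \underline{\dim}_B\big(\mathcal{G}(f^\alpha)\big) \le \overline{\dim}_B\big(\mathcal{G}(f^\alpha)\big)$ is immediate: the leftmost inequality is Lemma \ref{SDBl} applied to the continuous function $f^\alpha$ (continuity follows since $f^\alpha$ is the fixed point of the RB operator $T$, contractive with ratio $\alpha_{\max} < 1$ on $\mathcal{C}(I^q)$), and the remaining two are the universal relations between Hausdorff and box dimensions. So the real work is to bound $\overline{\dim}_B\big(\mathcal{G}(f^\alpha)\big)$ from above in each case.

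The core of the argument is an iteration of Lemma \ref{lemma3}. Unrolling the recursion $m$ times starting from a depth-$m$ cell $D_{(i_1)_1 \cdots (i_q)_m}$, I obtain
\begin{equation*}
Osc_{f^\alpha}\big[D_{(i_1)_1 \cdots (i_q)_m}\big] \le \Bigg(\prod_{k=1}^m \overline{\alpha}_{(i_1)_k \cdots (i_q)_k}\Bigg) Osc_{f^\alpha}\big[I^q\big] + \sum_{k=1}^m \Bigg(\prod_{j=k+1}^m \overline{\alpha}_{(i_1)_j \cdots (i_q)_j}\Bigg)\Bigg(\frac{K^*}{M_1^{k\sigma}}+\frac{\overline{\alpha}_{(i_1)_k \cdots (i_q)_k} K_s}{M_1^{(k-1)\sigma}}\Bigg)q^{\sigma/2}.
\end{equation*}
Summing over all multi-indices and using the Remark that $\sum \prod_k \overline{\alpha}_{(i)_k} = \overline{\gamma}^m$, the total depth-$m$ oscillation is controlled by $\overline{\gamma}^m Osc_{f^\alpha}[I^q]$ plus a tail $C \sum_{k=1}^m \overline{\gamma}^{m-k}(M_1 \cdots M_q)^k M_1^{-k\sigma}$. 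Feeding this into Lemma \ref{BBVL2} at the scale $\delta = M^{-m}$ yields
\begin{equation*}
N_\delta\big(\mathcal{G}(f^\alpha)\big) \le 2(M_1 \cdots M_q)^m + M^m \overline{\gamma}^m Osc_{f^\alpha}[I^q] + C M^m \sum_{k=1}^m \overline{\gamma}^{m-k} (M_1 \cdots M_q)^k M_1^{-k\sigma},
\end{equation*}
and the upper box dimension follows by forming $\log N_\delta/(m \log M)$ and letting $m \to \infty$.

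The three cases then amount to identifying which term dominates. In case (i), the hypothesis $\alpha_{\min} > M_1^{-\sigma}$ makes every summand of $\overline{\gamma}$ exceed the geometric ratio in the tail, so the dominant contribution is $M^m \overline{\gamma}^m$, giving $1 + \log \overline{\gamma}/\log M$. In case (ii), the crude estimate $\prod_k \overline{\alpha}_{(i)_k} \le \alpha_{\max}^m$ forces $\overline{\gamma}^m \le (M_1 \cdots M_q)^m \alpha_{\max}^m$, which dominates the tail since $\alpha_{\max} > M_1^{-\sigma}$; after using $\log(M_1 \cdots M_q)/\log M \le q$, one recovers $q+1 + \log \alpha_{\max}/\log M$. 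In case (iii), $\overline{\alpha}_{(i)_k} \le M_1^{-\sigma}$ bounds every factor, so each product is at most $M_1^{-m\sigma}$; the telescoping tail contributes only a prefactor linear in $m$ (absorbed logarithmically as $m \to \infty$), and the leading term $M^m (M_1 \cdots M_q)^m M_1^{-m\sigma}$ gives $q + 1 - \sigma$ after $\log(M_1\cdots M_q)/\log M \le q$ and $\log M_1/\log M \to 1$ in the natural limit $M \to M_1^+$ allowed by Lemma \ref{BBVL2}.

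The main obstacle is the bookkeeping that reconciles the two competing contributions to $N_\delta$: the propagated initial oscillation $Osc_{f^\alpha}[I^q]$ weighted by $\overline{\gamma}^m$, and the H\"older errors accumulated at each level $k$. Which one wins depends on the ratio between $\overline{\alpha}$-products and powers of $M_1^{-\sigma}$, which is exactly the trichotomy driving the three cases. A secondary technicality is that Lemma \ref{BBVL2} counts cuboids whose horizontal sides $M_k^{-m}$ exceed the vertical side $M^{-m}$; since the aspect ratio grows only polynomially in $m$, it is harmless once one divides by $m\log M$ and passes to the limit, but one must verify that this modified covering count still computes the standard $\overline{\dim}_B$.
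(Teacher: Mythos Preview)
Your proposal follows essentially the same approach as the paper: iterate Lemma~\ref{lemma3} to control the oscillation of $f^{\alpha}$ on depth-$m$ cells, feed the result into Lemma~\ref{BBVL2}, and then analyze the resulting geometric series in the three regimes determined by how $\alpha_{\min},\alpha_{\max}$ compare to $M_1^{-\sigma}$. The only cosmetic differences are that the paper bounds each individual cell oscillation before summing (rather than summing first as you do), and in case~(iii) it does not pass to any limit $M\to M_1^+$ but simply records the bound directly.
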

    \begin{proof}
Note that the functional equation is satisfied by the function $f^{\alpha}$,
       \[f^{\alpha}(x_1, \dots, x_q)=F_{i_1 \dots i_q}\Big(u_{1,i_1}^{-1}(x_1),\ldots,u_{q,i_q}^{-1}(x_q),f^{\alpha}\big(u_{1,i_1}^{-1}(x_1),\ldots,u_{q,i_q}^{-1}(x_q)\big)\Big)\]
     for all $(x_1, \dots, x_q)~~\in \prod_{k=1}^{q} I_{k, i_k}$. For $(x_1, \dots, x_q),(y_1, \dots, y_q) \in D_{(i_1)_1 \dots (i_q)_1} ,$ we obtain
     \begin{align*}
       &\big|  f^{\alpha}(x_1, \dots, x_q) -f^{\alpha}(y_1, \dots, y_q)\big| \\
       =~ & \Bigg|F_{(i_1)_1 \dots (i_q)_1}\Big(u^{-1}_{1,(i_1)_1}(x_1),\dots,u^{-1}_{q,(i_q)_1}(x_q),f^{\alpha}\big(u^{-1}_{1,(i_1)_1}(x_1),\dots,u^{-1}_{q,(i_q)_1}(x_q)\big)\Big) \\
       &- F_{(i_1)_1 \dots (i_q)_1}\Big(u^{-1}_{1,(i_1)_1}(y_1),\dots,u^{-1}_{q,(i_q)_1}(y_q),f^{\alpha}\big(u^{-1}_{1,(i_1)_1}(y_1),\dots,u^{-1}_{q,(i_q)_1}(y_q)\big)\Big) \Bigg|\\
        \end{align*}
     \begin{align*}
        \le ~& \sup_{(x_1, \dots, x_q),(y_1, \dots, y_q) \in D_{(i_1)_1 \dots (i_q)_1}} \Big|F_{(i_1)_1 \dots (i_q)_1}\Big(u^{-1}_{1,(i_1)_1}(x_1),\dots,u^{-1}_{q,(i_q)_1}(x_q),\\
       & f^{\alpha}\big(u^{-1}_{1,(i_1)_1}(x_1),\dots,u^{-1}_{q,(i_q)_1}(x_q)\big)\Big) \\&- F_{(i_1)_1 \dots (i_q)_1}\Big(u^{-1}_{1,(i_1)_1}(y_1),\dots,u^{-1}_{q,(i_q)_1}(y_q),f^{\alpha}\big(u^{-1}_{1,(i_1)_1}(y_1),\dots,u^{-1}_{q,(i_q)_1}(y_q)\big)\Big) \Big|\\=~ & \sup_{(\tilde{x_1}, \dots, \tilde{x_q}),(\tilde{y_1}, \dots, \tilde{y_q}) \in I^q} \Big|F_{(i_1)_1 \dots (i_q)_1}\big(\tilde{x_1}, \dots, \tilde{x_q},f^{\alpha}(\tilde{x_1}, \dots, \tilde{x_q})\big)\\
       - &~~~~ F_{(i_1)_1 \dots (i_q)_1}\big(\tilde{y_1}, \dots, \tilde{y_q},f^{\alpha}(\tilde{y_1}, \dots, \tilde{y_q} )\big) \Big| \\ =~ &Osc_{F_{(i_1)_1 \dots (i_q)_1}}[\mathcal{G}(f^{\alpha})].
     \end{align*}
     Since $(x_1, \dots, x_q)$ and $(y_1, \dots, y_q)$ are arbitrary, we get
     \[Osc_{f^{\alpha}}[D_{(i_1)_1 \dots (i_q)_1}] \le Osc_{F_{(i_1)_1 \dots (i_q)_1}}[\mathcal{G}(f^{\alpha})].\]
     By succession, it follows that
     \[ Osc_{f^{\alpha}}[D_{{(i_1)_1 \dots (i_1)_m (i_2)_1  \dots (i_2)_m \dots (i_q)_1 \dots(i_q)_m }}] \le~ Osc_{F_{(i_{1})_m\dots  (i_{q})_m}}[G_{(i_1)_1\dots (i_1)_{m-1} \dots (i_q)_1 \dots (i_q)_{m-1}}].\]
Using Lemma \ref{lemma3}, we obtain
    \begin{align*}
     & Osc_{f^{\alpha}}[D_{(i_1)_1 \dots (i_1)_m \dots (i_q)_1 \dots(i_q)_m}]\\
     \le~ & \overline{\alpha}_{(i_1)_m \dots (i_q)_m}\dots \overline{\alpha}_{(i_1)_1 \dots (i_q)_1} Osc_{f^{\alpha}}[I^q]\\
     & +2^{\sigma/2} K^* \Big( \frac{1}{M_{1}^{m \sigma}} + \frac{\overline{\alpha}_{(i_1)_m \dots (i_q)_m} }{M_{1}^{(m-1) \sigma}} + \dots + \frac{\overline{\alpha}_{(i_1)_m \dots (i_q)_m} \dots \overline{\alpha}_{i_2 j_2}}{M_{1}^{\sigma}} \Big)\\& +
      2^{\sigma/2} K_s \Big( \frac{\overline{\alpha}_{(i_1)_m \dots (i_q)_m}}{M_{1}^{(m-1) \sigma}} + \frac{\overline{\alpha}_{(i_1)_m \dots (i_q)_m}\overline{\alpha}_{i_{m-1} j_{m-1}} }{M_{1}^{(m-2) \sigma}} + \dots + \overline{\alpha}_{(i_1)_m \dots (i_q)_m} \dots \overline{\alpha}_{(i_1)_1 \dots (i_q)_1}\Big).
     \end{align*}
    (a) If $ \alpha_{\min} >  \frac{1}{M_{1}^\sigma},$ then we get
     \begin{align*}
          & Osc_{f^{\alpha}}[D_{(i_1)_1 \dots (i_1)_m \dots (i_q)_1 \dots(i_q)_m }] \\
          \le ~& \overline{\alpha}_{(i_1)_m \dots (i_q)_m}\dots \overline{\alpha}_{(i_1)_1 \dots (i_q)_1} Osc_{f^{\alpha}}[I^q]\\& +
           \frac{2^{\sigma/2} K^* (\overline{\alpha}_{(i_1)_m \dots (i_q)_m} \dots \overline{\alpha}_{(i_1)_2 \dots (i_q)_2})} {M_{1}^\sigma} \Big( 1+ \frac{1}{ \alpha_{\min}M_{1}^{ \sigma}} +\dots + \frac{1 }{\alpha_{\min} ^{m-1}M_{1}^{(m-1) \sigma}}  \Big)\\& +
           2^{\sigma/2} K_s \overline{\alpha}_{(i_1)_m \dots (i_q)_m} \dots \overline{\alpha}_{(i_1)_1 \dots (i_q)_1}  \Big( 1+ \frac{1}{ \alpha_{\min}M_{1}^{ \sigma}} +\dots + \frac{1 }{\alpha_{\min} ^{m-1}M_{1}^{(m-1) \sigma}}  \Big) \\ 
           \le~ & \overline{\alpha}_{(i_1)_m \dots (i_q)_m}\dots \overline{\alpha}_{(i_1)_1 \dots (i_q)_1} \left(Osc_{f^{\alpha}}[I^q]+ \left(\frac{2^{\sigma/2} K^* }{M_{1}^\sigma \alpha_{\min}}+2^{\sigma/2} K_s \right)\left(\frac{1}{1 - \frac{1}{\alpha_{\min}M_{1}^{ \sigma}}} \right)\right).
          \end{align*}
          
      We write 
      \[Osc_{f^{\alpha}}[D_{(i_1)_1 \dots (i_1)_m (i_2)_1  \dots (i_2)_m \dots (i_q)_1 \dots(i_q)_m }] \le K_1 \overline{\alpha}_{(i_1)_m \dots (i_q)_m} \dots \overline{\alpha}_{(i_1)_1 \dots (i_q)_1},\] 
      where $K_1 = Osc_{f^{\alpha}}[I^q]+ \left( \frac{2^{\sigma/2} K^* }{M_{1}^\sigma \alpha_{\min}}+2^{\sigma/2} K_s \right)\left( \frac{1}{1 -  \frac{1}{\alpha_{\min}M_{1}^{\sigma}}}\right).$\\
     From Lemma \ref{BBVL2}, we have
     \begin{equation*}
          \begin{aligned}
           N_{\delta}(\mathcal{G}(f^{\alpha})) &\le [M\dots M]^m  + \sum_{ ((i_1)_k, \dots, (i_q)_k) \in \overset{q}{{\underset{k=1}{ \prod}}} \Sigma_{M_{k}}}  \left \lceil { M^m Osc_{f^{\alpha}}[D_{{(i_1)_1 \dots (i_1)_m (i_2)_1  \dots (i_2)_m \dots (i_q)_1 \dots(i_q)_m }}]} \right \rceil\\
           & \le M^{mq}  + \sum_{((i_1)_k, \dots, (i_q)_k) \in \overset{q}{{\underset{k=1}{ \prod}}} \Sigma_{M_{k}}} \left \lceil { M^m K_1 \overline{\alpha}_{(i_1)_m \dots (i_q)_m} \dots \overline{\alpha}_{(i_1)_1 \dots (i_q)_1}} \right \rceil \\
           & \le 2M^{mq} + M^m K_1 \sum_{ ((i_1)_k, \dots, (i_q)_k)   \in \overset{q}{{\underset{k=1}{ \prod}}} \Sigma_{M_{k}}}    \overline{\alpha}_{(i_1)_m \dots (i_q)_m} \dots \overline{\alpha}_{(i_1)_1 \dots (i_q)_1} \\
           & \le 2M^{mq} + M^m K_1 \overline{\gamma}^m,
          \end{aligned}
          \end{equation*}
          where $ \overline{\gamma}= \overset{M_1,\dots,M_q}{\underset{i_{1},\dots,i_{q}=1}{\sum}} \overline{\alpha}_{i_{1},\dots, i_{q}}.$ Moreover, one has
          \begin{align*}
                    \overline{\lim}_{\delta \rightarrow 0}\frac{ \log\Big( N_{\delta}(\mathcal{G}(f^{\alpha}))\Big)}{- \log (\delta)}
                     & \le \overline{\lim}_{m \rightarrow \infty}\frac{ \log\Big(2M^{mq} + M^m K_1 \overline{\gamma}^m\Big) }{m \log (M)}\\ & =
                     1+ \frac{\log (\overline{\gamma})}{\log (M)}.
            \end{align*}
     (b)  Assuming that $  \frac{1}{M_{1}^\sigma} < \alpha_{\max} < 1,$ then one get
     
     \begin{align*}
       &Osc_{f^{\alpha}}[D_{{(i_1)_1 \dots (i_1)_m (i_2)_1  \dots (i_2)_m \dots (i_q)_1 \dots(i_q)_m }}]\\
       \le~ & \alpha_{\max}^m Osc_{f^{\alpha}}[I^q] +
       \frac{2^{\sigma/2} K^* \alpha_{\max}} {M_{1}^\sigma} \left( 1+ \frac{1}{ \alpha_{\max}M_{1}^{ \sigma}} +\dots + \frac{1 }{\alpha_{\max} ^{m-1}M_{1}^{(m-1) \sigma}}\right)\\
       & +2^{\sigma/2} K_s \alpha_{\max}^m \left( 1+ \frac{1}{ \alpha_{\max}M_{1}^{ \sigma}} +\dots + \frac{1 }{\alpha_{\max} ^{m-1}M_{1}^{(m-1) \sigma}}\right) \\ 
       \le~ & \alpha_{\max}^m \left(Osc_{f^{\alpha}}[I^q]+ \left(\frac{2^{\sigma/2} K^* }{M_{1}^\sigma \alpha_{\max}}+2^{\sigma/2} K_s \right)\left(\frac{1}{1 - \frac{1}{\alpha_{\max}M_{1}^{ \sigma}}} \right)\right).
     \end{align*}
     Whence, 
     \[Osc_{f^{\alpha}}[D_{{(i_1)_1 \dots (i_1)_m (i_2)_1  \dots (i_2)_m \dots (i_q)_1 \dots(i_q)_m }}] \le K_2 \alpha_{\max}^m, \]
     where $K_2= Osc_{f^{\alpha}}[I^q]+ \left( \frac{2^{\sigma/2} K^* }{M_{1}^\sigma \alpha_{\max}}+2^{\sigma/2} K_s \right)\left( \frac{1}{1 -  \frac{1}{\alpha_{\max}M_{1}^{ \sigma}}}\right).$
     On similar lines to that in (a), we deduce
     \begin{equation*}
     \begin{aligned}
        \overline{\lim}_{\delta \rightarrow 0}\frac{ \log\Big( N_{\delta}(\mathcal{G}(f^{\alpha}))\Big)}{- \log (\delta)}
       & \le \overline{\lim}_{m \rightarrow \infty}\frac{ \log\Big(2M^{mq} + M^{m(q+1)}M_2 \alpha_{\max}^m\Big) }{m \log (M)}\\ & =
        q+1+ \frac{\log (\alpha_{\max})}{\log (M)}.
     \end{aligned}
     \end{equation*}
     (c) If $ \alpha_{\max} \le  \frac{1}{M_{1}^\sigma},$ then we have
     \[Osc_{f^{\alpha}}[D_{{(i_1)_1 \dots (i_1)_m (i_2)_1  \dots (i_2)_m \dots (i_q)_1 \dots(i_q)_m }}] \le \frac{Osc_{f^{\alpha}}[I^q]+ m 2^{\sigma/2} (K^* + K_s)}{M_{1}^{m\sigma}}.\]
     We get the following in the same way as in the two preceding instances:
     \begin{equation*}
          \begin{aligned}
             & \overline{\lim}_{\delta \rightarrow 0}\frac{ \log\Big( N_{\delta}(\mathcal{G}(f^{\alpha}))\Big)}{- \log (\delta)}\\
             \le& \overline{\lim}_{m \rightarrow \infty}\frac{ \log\Big(2M^{mq} + M^{mq}M_{1}^{(1- \sigma)m} \big(Osc_{f^{\alpha}}[I^q]+ m 2^{\sigma/2} (K^* + K_s)\big)\Big) }{m \log (M)}=
             q+1- \sigma.
          \end{aligned}
          \end{equation*}
    \end{proof}
    \begin{remark}
     The above theorem can be compared with \cite{Maly}. It should be noted that Malysz gives an exact value of box dimension for a special class of multivariate FIFs. However, our result gives bounds of box dimension for a more general class of FIFs.
    \end{remark}
\begin{theorem}\label{BBVL3}
    Let $f , \alpha$ and $s$ be  H\"older continuous with exponent $\sigma$ and $s(x_{1,i_1},\dots,x_{q.i_q})=f(x_{1,i_1},\dots,x_{q.i_q})$ for all $(i_1, \dots, i_q) \in \overset{q}{\underset{k=1}{ \prod}} \partial  \Sigma_{M_{k},0} .$ If $a:= \underset{{(i_1, \dots, i_q)\in \overset{q}{\underset{k=1}{ \prod}}  \Sigma_{M_{k}}}}{\min}\{|a_i|,|c_j| \}$ and\\
    $\max \left\{  \frac{\|\alpha\|_{\infty}}{a^{\sigma}}, \|\alpha\| \right \}< 1 $, then $f^{\alpha}$ is H\"older continuous with exponent $\sigma.$\\
     Moreover, $q \le \dim_H \big(\mathcal{G}(f^{\alpha})\big) \le \underline{\dim}_B \big(\mathcal{G}(f^{\alpha})\big) \le \overline{\dim}_B\big(\mathcal{G}(f^{\alpha})\big) \le q+1- \sigma.$
 \end{theorem}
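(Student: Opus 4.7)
The theorem asserts Hölder continuity of $f^{\alpha}$ with exponent $\sigma$ and the chain of dimension inequalities $q \le \dim_H(\mathcal{G}(f^\alpha)) \le \overline{\dim}_B(\mathcal{G}(f^\alpha)) \le q+1-\sigma$. I would first prove the Hölder estimate, then deduce the dimension bound as a formal consequence.

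The dimension bound is routine once Hölder continuity is in hand: partition $I^q$ into $M^q$ sub-cubes of side $1/M$; on each sub-cube the oscillation of $f^{\alpha}$ is $O(M^{-\sigma})$, so the graph above each is covered by $O(M^{1-\sigma})$ cubes of side $1/M$, giving $N_{\delta}(\mathcal{G}(f^{\alpha})) = O(M^{q+1-\sigma})$ at scale $\delta = 1/M$. Combined with $\dim_H(\mathcal{G}(f^{\alpha})) \ge q$ from Lemma~\ref{SDBl}, this gives the stated chain of inequalities.

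For the Hölder continuity, I would use a direct oscillation argument that sharpens case~(c) of Theorem~\ref{Th4.10}. The hypothesis $\|\alpha\|_{\infty}/a^{\sigma} < 1$ together with the fact that each $u_{k,i_k}$ is a contraction (so $a \le 1/M_1$) yields the strict inequality $\overline{\alpha}_{i_1 \cdots i_q} \le \|\alpha\|_{\infty} < M_1^{-\sigma}$. Under this strict condition, the oscillation recursion of Lemma~\ref{lemma3} unrolls into an absolutely convergent geometric series with ratio $\|\alpha\|_{\infty} M_1^{\sigma} < 1$, which gives
\[ Osc_{f^{\alpha}}[D_{(i_1)_1 \cdots (i_q)_m}] \le \frac{C}{M_1^{m\sigma}} \]
for every level-$m$ cell, where $C$ depends only on $f$, $\alpha$, $s$, $q$, $\sigma$ and is independent of $m$. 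This is a genuine improvement over Theorem~\ref{Th4.10}(c), in which the weaker hypothesis $\alpha_{\max} \le M_1^{-\sigma}$ produced a spurious linear factor $m$ in the numerator.

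To lift this per-cell bound to a global Hölder estimate, take $x, y \in I^q$ with $r = d_{I^q}(x,y) > 0$ and choose $m \in \mathbb{N}$ such that $M_1^{-(m+1)} < r \le M_1^{-m}$. Each level-$m$ cell has side length at least $M_1^{-m} \ge r$ in every coordinate, so $x$ and $y$ lie inside a block of at most $2^q$ adjacent level-$m$ cells. Building a coordinate-by-coordinate chain $x = z_0, z_1, \dots, z_q = y$ (each step changing one coordinate, inserting a boundary point whenever the endpoints straddle a cell wall) traverses $O(q)$ same-cell pairs, and summing the per-cell oscillation bound yields
\[ |f^{\alpha}(x) - f^{\alpha}(y)| \le 2q \cdot \frac{C}{M_1^{m\sigma}} \le 2q\, C\, M_1^{\sigma}\, r^{\sigma}, \]
so $f^{\alpha} \in \mathcal{H}^{\sigma}(I^q)$. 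The main technical obstacle is organising the inter-cell chain cleanly, since Lemma~\ref{lemma3} only controls behaviour inside a single cell; the auxiliary hypothesis $\|\alpha\| < 1$ in the Hölder norm is used to keep the constant $K^{*}$ appearing in Lemma~\ref{lemma3} (and hence the constant $C$ above) uniformly bounded along the iteration.
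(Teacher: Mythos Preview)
Your approach is correct but genuinely different from the paper's. The paper does \emph{not} revisit the oscillation machinery of Lemma~\ref{lemma3} and Theorem~\ref{Th4.10} at all; instead it runs a Banach fixed-point argument directly in the H\"older space. Concretely, it restricts the Read--Bajraktarevi\'c operator $T_f$ to the closed subset $\mathcal{H}^{\sigma}_f(I^q)=\{g\in\mathcal{H}^{\sigma}(I^q):g=f\text{ on the corner grid}\}$, verifies that $T_f$ maps this set into itself, and then shows
\[
\|T_fg-T_fh\|\ \le\ \max\Big\{\tfrac{\|\alpha\|_\infty}{a^\sigma},\,\|\alpha\|\Big\}\,\|g-h\|,
\]
so that the hypothesis makes $T_f$ a strict contraction in the H\"older norm and its unique fixed point $f^{\alpha}$ automatically lies in $\mathcal{H}^{\sigma}(I^q)$. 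The dimension bound is then read off exactly as you describe. Your route---sharpen case~(c) of Theorem~\ref{Th4.10} via the strict inequality $\alpha_{\max}<M_1^{-\sigma}$ to kill the linear factor $m$, then chain across adjacent level-$m$ cells---is a legitimate alternative and arguably more elementary, since it avoids setting up the operator on the Banach space $\mathcal{H}^{\sigma}(I^q)$. One correction, though: your explanation of the role of the second hypothesis $\|\alpha\|<1$ is off. The constant $K^*$ in Lemma~\ref{lemma3} is already fixed (it depends only on $\|f^\alpha\|_\infty$, $\|s\|_\infty$, $K_\alpha$, $K_f$) and needs no extra control along the iteration; in fact your oscillation argument appears to go through using only $\|\alpha\|_\infty/a^\sigma<1$. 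The hypothesis $\|\alpha\|<1$ in the H\"older norm is what the \emph{paper's} contraction estimate genuinely requires, because the term $\|g-h\|_\infty[\alpha]_\sigma$ appears when bounding $[T_fg-T_fh]_\sigma$. So the paper's method is cleaner but uses the full strength of the hypotheses, while yours is more computational but potentially works under a slightly weaker assumption.
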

 \begin{proof}
     Let $ \mathcal{H}^{\sigma}_f(I^q)=\left \{ g \in \mathcal{H}^{\sigma}(I^q): g(x_{1,i_1},\dots,x_{q,i_q})=f(x_{1,i_1},\dots,x_{q.i_q}) \text{ for all } i_k \in \partial  \Sigma_{M_{k},0},~ k\in \Sigma_q\right\}.$ We note that the space $\mathcal{H}^{\sigma}_f(I ^q)$ is a closed subset of $\mathcal{H}^{\sigma}(I^q)$ and consequently  $\mathcal{H}^{\sigma}_f(I ^q)$ is a complete metric space with respect to the metric induced by norm $\|.\|.$ Define a function $T: \mathcal{H}^{\sigma}_f(I ^q) \rightarrow \mathcal{H}^{\sigma}_f(I^q)$ by 
     \[ (T_fg)(x_1,\ldots, x_q)=f(x_1,\ldots, x_q)+\alpha(x_1,\ldots, x_q) ~(g-s)(u_{1,i_1}^{-1}(x_1),\ldots,u_{q,i_q}^{-1}(x_q))\]
     for all $(x_1,\ldots, x_q) \in \overset{q}{\underset{k=1}{ \prod}}I_{k,i_k},$ where $(i_1, \dots, i_q)  \in \overset{q}{\underset{k=1}{ \prod}} \Sigma_{M_{k}} .$
     We begin by proving that $T$ is well-defined. Note that
\begin{align*}
&[T_fg]_\sigma\\
= &\max_{(i_1, \dots, i_q)\in \overset{q}{\underset{k=1}{ \prod}} \Sigma_{M_{k}}} \underset{(x_1,\ldots, x_q), (y_1,\ldots, y_q) \in \overset{q}{\underset{k=1}{ \prod}} I_{k, i_k}}{\sup_{(x_1,\ldots, x_q)\ne (y_1,\ldots, y_q)}} \frac{|T_fg(x_1,\ldots, x_q)-T_fg(y_1,\ldots, y_q)|}{\|(x_1,\ldots, x_q)-(y_1,\ldots, y_q)\|_2^{\sigma}}\\
\le&  \max_{(i_1, \dots, i_q) \in \overset{q}{\underset{k=1}{ \prod}} \Sigma_{M_{k}}} \left[ \underset{(x_1,\ldots, x_q), (y_1,\ldots, y_q) \in \overset{q}{\underset{k=1}{ \prod}} I_{k, i_k}}{\sup_{(x_1,\ldots, x_q)\ne (y_1,\ldots, y_q)}} \frac{|f(x_1,\ldots, x_q)-f(y_1,\ldots, y_q)|}{\|(x_1,\ldots, x_q)-(y_1,\ldots, y_q)\|_2^{\sigma}}\right.\\
& + \underset{(x_1,\ldots, x_q), (y_1,\ldots, y_q) \in \overset{q}{\underset{k=1}{ \prod}} I_{k, i_k}}{\sup_{(x_1,\ldots, x_q)\ne (y_1,\ldots, y_q)}} \frac{\|\alpha\|_{\infty} \Big|(g-s)(u_{1,i_1}^{-1}(x_1),\ldots,u_{q,i_q}^{-1}(x_q))-(g-s)(u_{1,i_1}^{-1}(y_1),\ldots,u_{q,i_q}^{-1}(y_q))\Big|}{\|(x_1,\ldots, x_q)-(y_1,\ldots, y_q)\|_2^{\sigma}}\\
&\left.+\underset{(x_1,\ldots, x_q), (y_1,\ldots, y_q) \in \overset{q}{\underset{k=1}{ \prod}} I_{k, i_k}}{\sup_{(x_1,\ldots, x_q)\ne (y_1,\ldots, y_q)}} \frac{\|g-s\|_{\infty} \Big|\alpha(x_1,\ldots, x_q)-\alpha(y_1,\ldots, y_q)\Big|}{\|(x_1,\ldots, x_q)-(y_1,\ldots, y_q)\|_2^{\sigma}}\right]\\
\le & ~[f]_{\sigma}+ \frac{\|\alpha\|_{\infty}}{a^{\sigma}} \big( [g]_{\sigma}+[s]_{\sigma} \big)+ \|g-s\|_{\infty}[\alpha]_{\sigma}.
\end{align*}
For $g, h \in \mathcal{H}^{\sigma}_f(I^q)$
     \begin{equation*}
          \begin{aligned}
                      \|T_fg -T_fh\| &= \|T_fg -T_fh\|_{\infty} + [T_fg-T_fh]_{\sigma}\\
                      &\le \| \alpha \|_{\infty} \|g -h\|_{\infty} + \frac{|\alpha|}{a^{\sigma}} [g-h]_{\sigma}+\|g-h\|_{\infty}[\alpha]_{\sigma}\\
                      &= \frac{\|\alpha\|_{\infty}}{a^{\sigma}} [g-h]_{\sigma}+\|g-h\|_{\infty}\|\alpha\| \\
             &\le \frac{\|\alpha\|_{\infty}}{a^{\sigma}} \| g-h\|+\|g-h\|_{\infty}\|\alpha\|\\
              &\le  \max \left\{ \frac{\|\alpha\|_{\infty}}{a^{\sigma}}, \|\alpha\| \right \} \|g-h\|    .
          \end{aligned}
          \end{equation*}
          It follows from the hypothesis that $T$ is a contraction function on $ \mathcal{H}^{\sigma}_f(I ^q).$ Hence, a unique fixed point of $T$ is obtained by the Banach contraction principle. In particular, $f^{\alpha} \in \mathcal{H}^{\sigma}_f(I ^q)$.

  Let $K_{f^{\alpha}}$ be the H\"{o}lder constant for a  H\"{o}lder continuous function $f^{\alpha}: I^q \rightarrow \mathbb{R}$ with exponent $\sigma$. For $\delta= \frac{1}{M^{m}}$ and using the Lemma \ref{BBVL2} we get
\begin{align*}
\overline{\dim}_B\big(\mathcal{G}(f^{\alpha})\big)=
\varlimsup_{\delta \rightarrow 0} \frac{\log \left(N_{\delta}\left(\operatorname{\mathcal{G}}\left(f^{\alpha}\right)\right)\right)}{-\log (\delta)} & \leq \varlimsup_{m \rightarrow \infty} \frac{\log \left(2 M^{ mq}+M^{m(1-\sigma)} M^{mq} K_{f^{\alpha}} 2^{\frac{\sigma}{2}}\right)}{m \log (M)} =1+q-\sigma,
\end{align*}
and this completes the proof.
\end{proof}
\begin{remark}
 The above theorem can be compared with Theorem of \cite{Jha}, wherein Jha and Verma have shown a similar result for univariate fractal functions. 
\end{remark}

\begin{example}
Consider the following function in the domain $[-1,1]\times[-1,1]$ as the germ function
\[
f(x,y)=41(y^3-x^5)^2 + (y-x^2)^3
\]
 Let us consider the following parameters:
\begin{enumerate}[(i)]
    \item A net $\Delta$ determined by the partition $\{-1,-0.5,0,0.5,1\}$ of $[-1,1]$.
    \item Scaling function $\alpha(x,y)=0.1, 0.2, 0.4, 0.5, 0.7, 0.9, $ respectively for all $(x,y)\in [-1,1]\times [-1,1].$
    \item Base function $s(x,y)=x^3y^5f(x,y).$
    \end{enumerate}
\begin{figure}[H]
\begin{minipage}{0.3\textwidth}
\includegraphics[width=1.0\linewidth]{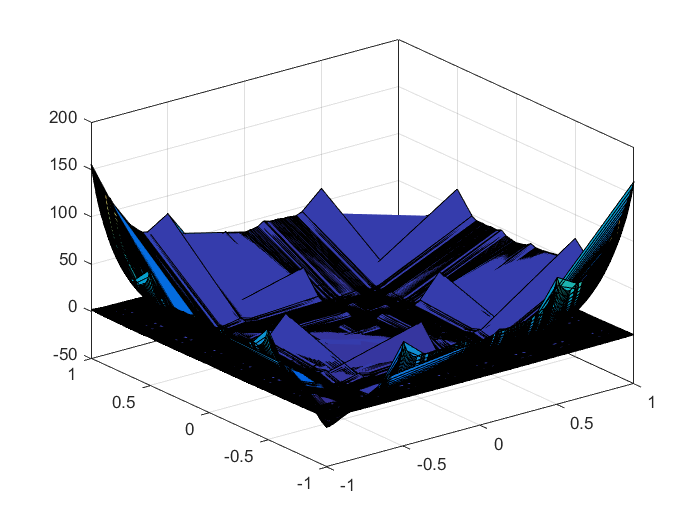}
{\hspace*{1.5cm} $\alpha(x,y)=0.1$}
\end{minipage}\hspace*{0.3cm}
\begin{minipage}{0.3\textwidth}
\includegraphics[width=1.0\linewidth]{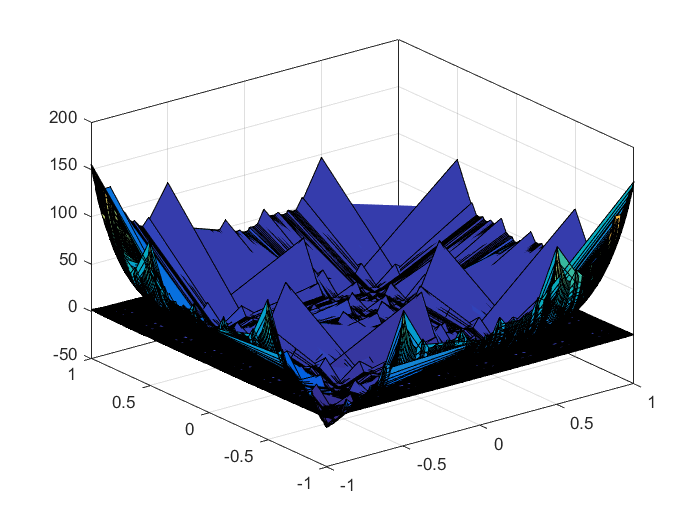}
{\hspace*{1.5cm} $\alpha(x,y)=0.2$}
\end{minipage}\hspace*{0.3cm}
\begin{minipage}{0.3\textwidth}
\includegraphics[width=1.0\linewidth]{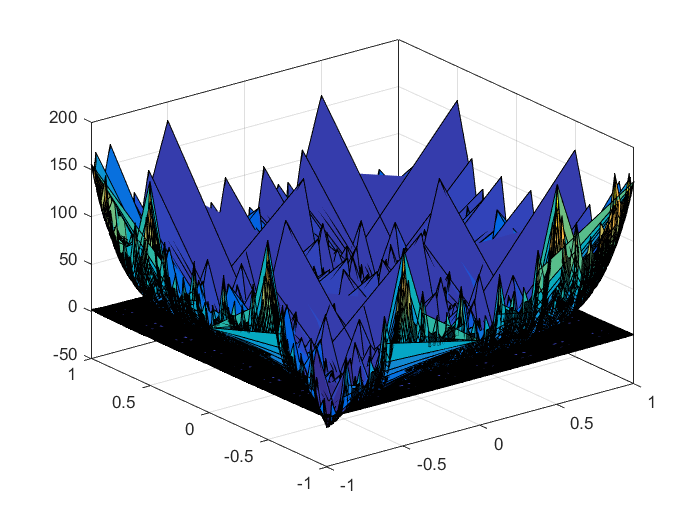}
{\hspace*{1.5cm}}$\alpha(x,y)=0.4$
\end{minipage}
\end{figure}
\begin{figure}[H]
\begin{minipage}{0.3\textwidth}
\includegraphics[width=1.0\linewidth]{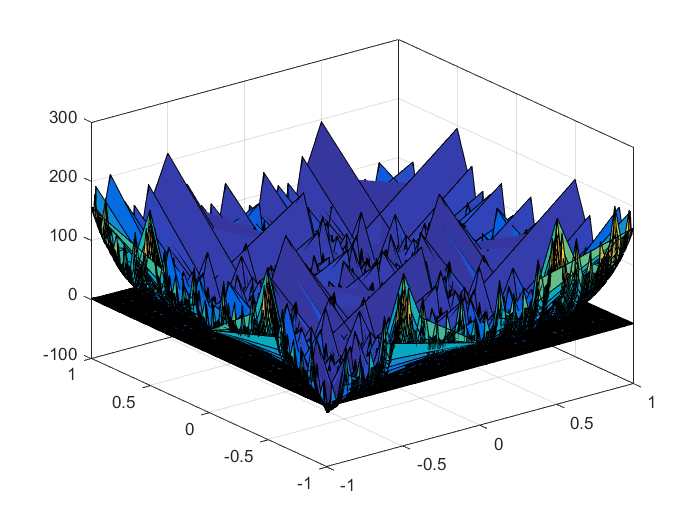}
{\hspace*{1.5cm}  $\alpha(x,y)=0.5$}
\end{minipage}\hspace*{0.3cm}
\begin{minipage}{0.3\textwidth}
\includegraphics[width=1.0\linewidth]{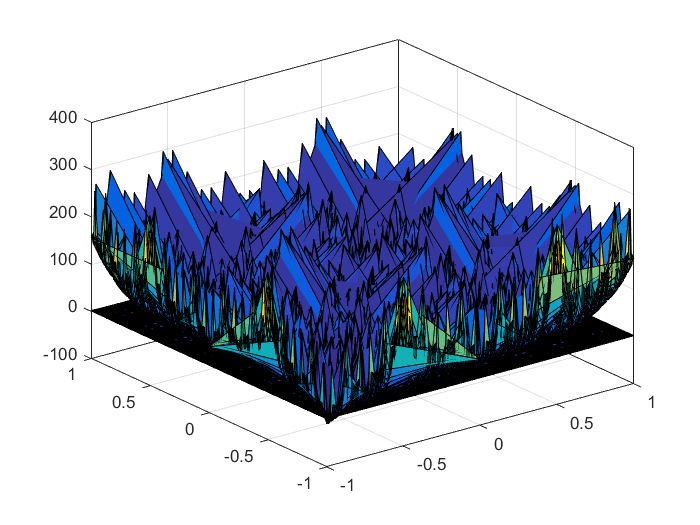}
{\hspace*{1.5cm}$\alpha(x,y)=0.7$}
\end{minipage}\hspace*{0.3cm}
\begin{minipage}{0.3\textwidth}
\includegraphics[width=1.0\linewidth]{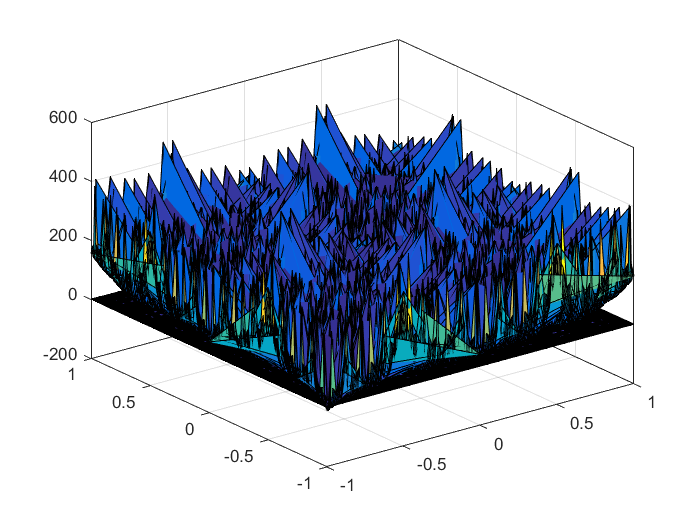}
{\hspace*{1.5cm} $\alpha(x,y)=0.9$}
\end{minipage} 
\caption{$\mathcal{G}(f^{\alpha}) $ on different Scaling function}
\label{fig1}
\end{figure}
\end{example}

\subsection{Restrictions of $\alpha$-fractal function on axis}\label{Restrictions}
Let $f^{\alpha}$ be a fractal interpolation function as in Subsection \ref{FIF}. Now define a function $g^{\alpha}:I_{1,i_1}\rightarrow \mathbb{R}$ such that 
\begin{equation}\label{proj}
g^{\alpha^g}(x_1)=f^{\alpha}\left(x_1,u_{2,1}(0),\ldots,u_{q,1}(0)\right) \text{ for all } x_1\in I_{1,i_1}
\end{equation}
where $\alpha^g(x_1)=\alpha \left(x_1,u_{2,1}(0),\ldots,u_{q,1}(0)\right),~ s^g(x_1,\ldots, x_q)=s(x_1,u_{2,1}(0),\ldots, u_{q,1}(0))$.

\begin{theorem}
Let $f\in \mathcal{C}(I^q)$ be a continuous function and $f^{\alpha}$ the $\alpha$-fractal function corresponding to $f$, then $g^{\alpha^g}$ as defined in Equation \eqref{proj} will be the induced $\alpha$-fractal function corresponding to a continuous function $g:I\rightarrow \mathbb{R}$, where $g(x^1)=f(x^1,u_{2,1}(0),\ldots, u_{q,1}(0))$ for all $x^1\in I$. Moreover, $\dim\left(\mathcal{G}\left(g^{\alpha^g}\right)\right)\leq \dim \left( \mathcal{G}\left(f^{\alpha}\right)\right)$.
\end{theorem}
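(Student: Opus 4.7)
The plan has two parts: first, to verify that the restriction $g^{\alpha^g}$ satisfies the univariate self-referential equation characterising the $\alpha$-fractal function associated with $g$, $\alpha^g$, and $s^g$, and then appeal to uniqueness of the RB fixed point; second, to obtain the dimension inequality from a short isometric-embedding argument combined with monotonicity of Hausdorff and box dimensions.

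For the first step, the key observation is that since the index $i_k = 1$ is odd, equation \eqref{affine} yields $u_{k,1}(x_{k,0}) = x_{k,0} = 0$ for every $k \geq 2$, so $u_{k,1}(0) = 0$ and hence $u_{k,1}^{-1}(0) = 0$. The base point $(u_{2,1}(0),\ldots,u_{q,1}(0)) = (0,\ldots,0)$ is therefore fixed by $u_{k,1}^{-1}$ in each coordinate $k \geq 2$, and moreover $(x_1, 0, \ldots, 0) \in I_{1,i_1} \times I_{2,1} \times \cdots \times I_{q,1}$ for every $x_1 \in I_{1,i_1}$. Applying the self-referential equation \eqref{alphafrac} for $f^{\alpha}$ at the index tuple $(i_1, 1, 1, \ldots, 1)$ and a point $(x_1, 0, \ldots, 0)$, all auxiliary $u_{k,1}^{-1}$ arguments collapse to $0$. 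Under the identifications $g(x_1) = f(x_1,0,\ldots,0)$, $\alpha^g(x_1) = \alpha(x_1,0,\ldots,0)$, and $s^g(x_1) = s(x_1,0,\ldots,0)$, the resulting equation reads
\[
g^{\alpha^g}(x_1) = g(x_1) + \alpha^g\bigl(u_{1,i_1}^{-1}(x_1)\bigr)\Bigl[g^{\alpha^g}\bigl(u_{1,i_1}^{-1}(x_1)\bigr) - s^g\bigl(u_{1,i_1}^{-1}(x_1)\bigr)\Bigr],
\]
which is precisely the univariate self-referential equation for the $\alpha$-fractal function of $g$. Continuity of $g^{\alpha^g}$ is inherited from that of $f^{\alpha}$; the endpoint matching $s^g(0) = g(0)$ and $s^g(1) = g(1)$ follows from $(0,0,\ldots,0)$ and $(1,0,\ldots,0)$ being corner points of $I^q$ at which $s$ agrees with $f$ by hypothesis; and $\|\alpha^g\|_\infty \leq \|\alpha\|_\infty < 1$. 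By uniqueness of the fixed point of the univariate RB operator, $g^{\alpha^g}$ coincides with the induced $\alpha$-fractal function for $g$.

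For the second part, I define the map $\phi: \mathcal{G}(g^{\alpha^g}) \to \mathbb{R}^{q+1}$ by $\phi(x_1, z) = (x_1, 0, \ldots, 0, z)$. The identity $g^{\alpha^g}(x_1) = f^{\alpha}(x_1, 0, \ldots, 0)$ implies $\phi(\mathcal{G}(g^{\alpha^g})) \subseteq \mathcal{G}(f^{\alpha})$, and $\phi$ is plainly an isometric embedding of $\mathbb{R}^2$ into $\mathbb{R}^{q+1}$. Since both Hausdorff and box dimensions are invariant under isometries and monotone under set inclusion, one obtains $\dim \mathcal{G}(g^{\alpha^g}) = \dim \phi(\mathcal{G}(g^{\alpha^g})) \leq \dim \mathcal{G}(f^{\alpha})$.

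The main obstacle lies in the bookkeeping of the first step: one must keep track of the correct index tuple $(i_1, 1, \ldots, 1)$ and check that the parity convention of \eqref{affine} really does force $u_{k,1}^{-1}$ to fix $0$ for every $k \geq 2$, so that the multivariate self-referential equation cleanly reduces to a univariate one in $x_1$. Once that verification is in hand, both the fixed-point/uniqueness argument and the dimension monotonicity argument are routine.
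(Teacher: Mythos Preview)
Your proposal is correct and follows essentially the same approach as the paper: verify that the restriction satisfies the univariate self-referential equation (using $u_{k,1}(0)=0$), then invoke monotonicity of dimension. Your treatment is in fact slightly more careful than the paper's, which simply asserts $\mathcal{G}(g^{\alpha^g})\subseteq\mathcal{G}(f^{\alpha})$ without addressing that the two graphs lie in different ambient spaces; your isometric-embedding argument makes this step rigorous.
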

\begin{proof}
To prove that $g^{\alpha^g}$ is an $\alpha$-fractal function corresponding to $g$, it is sufficient to prove that $g^{\alpha^g}$ satisfies the self-referential equation. With the definition of $u_{k,i_k}$, it is clear that for all $k\in  \Sigma_{q}$, $u_{k,1}(0)=0$, then we have
\begin{align*}
    &g^{\alpha^g}(x_1)\\
    =& f^{\alpha}\left(x_1,u_{2,1}(0),\ldots, u_{q,1}(0)\right)\\
    =&f(x_1,u_{2,1}(0),\ldots, u_{q,1}(0))+\alpha(x_1,u_{1,1}(0),\ldots, u_{q,1}(0))\left[f^{\alpha}\left(u_{1,i_1}^{-1}(x_1),0,\ldots, 0\right)-s\left(u_{1,i_1}^{-1}(x_1),0,\ldots, 0\right)\right]\\
    =&g(x_1)+\alpha^{g}(x_1)\left[g^{\alpha^g}\left(u_{1,i_1}^{-1}(x_1)\right)-s^{\alpha^g}\left(u_{1,i_1}^{-1}(x_1))\right)\right.]
\end{align*}
It is satisfying the self referential Equation \eqref{alphafrac}, hence $g^{\alpha^{g}}$ is an $\alpha$-fractal function corresponding to $g$.\\ 
Moreover, $\mathcal{G}\left(g^{\alpha^g}\right)\subseteq \mathcal{G}\left(f^{\alpha}\right)$, implies that 
\[\dim\left(\mathcal{G}\left(g^{\alpha^g}\right)\right)\leq \dim \left( \mathcal{G}\left(f^{\alpha}\right)\right).\]
This completes the proof.
\end{proof}

\begin{example}
Let $f: I^2 \to \mathbb{R}$ be a real-valued function. Using $f$, we define $g: I \to \mathbb{R}$ as follows:
\begin{align}\label{fg}
    g(x)=f(x,0)
\end{align}

It can be seen that 
\[\dim\mathcal{G}(g) \le \dim \mathcal{G} (f).\]
Consider $f(x,y)=x^2+y^2$, then the function $g(x)=x^2$ is obtained by using Equation \ref{fg}. It can be easily seen that $\dim \mathcal{G}(f) =2$ and $\dim \mathcal{G}(g) =1$, that is, $\dim \mathcal{G} (g) \le \dim \mathcal{G}(f)$. Now, take $f(x,y)= W(x)+y$, where $W: I \to \mathbb{R}$ is the Weierstrass function. Similarly, we get $g(x)= W(x)$. Finally,  $\dim \mathcal{G}(f)= 1+ \dim(W)$ and $\dim \mathcal{G}(g)= \dim \mathcal{G}(W)$. Hence, $\dim \mathcal{G}(g) \le \dim \mathcal{G}(f)$. Now, select $f(x,y)=W(x)y$, then $g(x)=0$. Thus, we get $\dim \mathcal{G}(fg)=1$.

\end{example}
\begin{theorem}\label{BBVL71}
    Let $f , \alpha$ and $s$ be  H\"older continuous with exponent $\sigma$ and $s(x_{1,i_1},\dots,x_{q.i_q})=f(x_{1,i_1},\dots,x_{q.i_q})$ for all $(i_1, \dots, i_q) \in \overset{q}{\underset{k=1}{ \prod}} \partial  \Sigma_{M_{k},0} .$ If $a:= \underset{{(i_1, \dots, i_q)\in \overset{q}{\underset{k=1}{ \prod}}  \Sigma_{M_{k}}}}{\min}\{|a_i|,|c_j| \}$ and\\
    $  \max \left\{  \frac{\|\alpha\|_{\infty}}{a^{\sigma}}, \|\alpha\| \right \}< 1 $, then $g^{\alpha^g}$ is H\"older continuous with exponent $\sigma.$ \\
    Moreover, $1 \le \dim_H \big(\mathcal{G}(g^{\alpha^g})\big) \le \underline{\dim}_B \big(\mathcal{G}(g^{\alpha^g})\big) \le \overline{\dim}_B\big(\mathcal{G}(g^{\alpha^g})\big) \le 2- \sigma.$
 \end{theorem}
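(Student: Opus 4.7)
The plan is to leverage the previous theorem \ref{BBVL3} directly. The statement has two parts: Hölder continuity of $g^{\alpha^g}$ with exponent $\sigma$, and the box dimension bound $2-\sigma$. Both follow almost for free once we notice that $g^{\alpha^g}$ is nothing more than the restriction of $f^{\alpha}$ to the one-dimensional slice $\{(x_1, u_{2,1}(0), \ldots, u_{q,1}(0)) : x_1 \in I\}$.

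First I would establish the Hölder regularity. By Theorem \ref{BBVL3}, under the same hypotheses on $f, \alpha, s, a$, the $\alpha$-fractal function $f^{\alpha}$ belongs to $\mathcal{H}^{\sigma}(I^q)$, so there is a constant $K_{f^{\alpha}} > 0$ with $|f^{\alpha}(x)-f^{\alpha}(y)| \le K_{f^{\alpha}} d_{I^q}(x,y)^{\sigma}$ for all $x,y \in I^q$. Plugging in the specific points $x = (x_1, u_{2,1}(0), \ldots, u_{q,1}(0))$ and $y = (y_1, u_{2,1}(0), \ldots, u_{q,1}(0))$, the second through $q$-th coordinates cancel, giving
\begin{equation*}
|g^{\alpha^g}(x_1) - g^{\alpha^g}(y_1)| = |f^{\alpha}(x) - f^{\alpha}(y)| \le K_{f^{\alpha}} |x_1 - y_1|^{\sigma},
\end{equation*}
so $g^{\alpha^g} \in \mathcal{H}^{\sigma}(I)$ with Hölder constant $K_{g^{\alpha^g}} \le K_{f^{\alpha}}$.

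Next I would dispatch the lower bound $\dim_H(\mathcal{G}(g^{\alpha^g})) \ge 1$ by invoking Lemma \ref{SDBl} with $q=1$, since $g^{\alpha^g}$ is continuous on $I$.

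For the upper bound, I would repeat the oscillation argument from the end of Theorem \ref{BBVL3} specialized to the univariate case. Using Lemma \ref{BBVL2} with $q=1$ and mesh $\delta = \frac{1}{M^m}$, the $\delta$-box count satisfies
\begin{equation*}
N_{\delta}(\mathcal{G}(g^{\alpha^g})) \le M^m + \sum_{i=1}^{M^m} \bigl\lceil M^m \, Osc_{g^{\alpha^g}}[I_{m,i}] \bigr\rceil \le 2M^m + M^m \cdot M^m K_{g^{\alpha^g}} M^{-m\sigma},
\end{equation*}
where $I_{m,i}$ denotes the $i$-th subinterval of length $M^{-m}$ and the oscillation estimate uses the Hölder bound just established. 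Taking $\varlimsup$ and dividing by $-\log \delta = m\log M$ yields
\begin{equation*}
\overline{\dim}_B(\mathcal{G}(g^{\alpha^g})) \le \varlimsup_{m \to \infty} \frac{\log\bigl(2M^m + K_{g^{\alpha^g}} M^{m(2-\sigma)}\bigr)}{m \log M} = 2 - \sigma,
\end{equation*}
and the standard chain $\dim_H \le \underline{\dim}_B \le \overline{\dim}_B$ completes the string of inequalities.

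There is no real obstacle here; the proof is essentially a one-dimensional echo of the argument used in Theorem \ref{BBVL3}. The only thing worth flagging is that the hypothesis $\max\{\|\alpha\|_\infty/a^\sigma, \|\alpha\|\} < 1$ is used implicitly through the invocation of Theorem \ref{BBVL3}; it does not need to be re-verified for the induced scaling function $\alpha^g$, because we never set up a new fixed-point problem on $I$. Everything is inherited from the multivariate fixed point.
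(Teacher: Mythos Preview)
Your proposal is correct and follows essentially the same approach as the paper: invoke Theorem \ref{BBVL3} to get H\"older regularity of $f^{\alpha}$, restrict to the slice to inherit it for $g^{\alpha^g}$, and then bound the upper box dimension via the standard oscillation count for H\"older functions. The only cosmetic difference is that the paper cites Falconer's Corollary~11.2 directly for the bound $\overline{\dim}_B \le 2-\sigma$, whereas you spell out the underlying $\delta$-box estimate; these are the same argument.
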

 \begin{proof}
 In view of Theorem \ref{BBVL3}, the fractal function $f^{\alpha}$ is H\"older continuous with exponent $\sigma.$ Since \begin{equation*}
g^{\alpha^g}(x_1)=f^{\alpha}\left(x_1,u_{2,1}(0),\ldots,u_{q,1}(0)\right) \text{ for all } x_1\in I_{1,i_1}
\end{equation*}
where $\alpha^g(x_1)=\alpha \left(x_1,u_{2,1}(0),\ldots,u_{q,1}(0)\right),~ s^g(x_1,\ldots, x_q)=s(x_1,u_{2,1}(0),\ldots, u_{q,1}(0))$, we have 
\begin{equation*}
|g^{\alpha^g}(x)- g^{\alpha^g}(y)| = |f^{\alpha}\left(x,u_{2,1}(0),\ldots,u_{q,1}(0)\right)-f^{\alpha}\left(y,u_{2,1}(0),\ldots,u_{q,1}(0)\right)| \le |x -y|^{\sigma}.
\end{equation*}
That is, $g^{\alpha^g}$ is H\"older continuous with exponent $\sigma.$ Now, by \cite[Corollary $11.2$]{Fal} it is obvious that $ \overline{\dim}_B\big(\mathcal{G}(g^{\alpha^g})\big) \le 2- \sigma.$ This completes the proof.
 \end{proof}
\section{Mixed Riemann-Liouville fractional integral}\label{Mrl}
\begin{definition}
 The Mixed Riemann-Liouville fractional integral of a multivariate continuous function $f: I_q \to \mathbb{R}$ is defined as
 \begin{align*}
     &\left(\mathfrak{J}^{(\beta_1,\ldots, \beta_q)}f\right)(x_1,\ldots, x_q)\\
     = &\frac{1}{\Gamma (\beta_1)\cdots \Gamma(\beta_q) } \int_{0} ^{x_1} \cdots \int_{0}^{x_q} (x_1-t_1)^{\beta_1-1} \cdots  (x_q-t_q)^{\beta_q-1} f(t_1,\ldots, t_q)~ \mathrm{d}t_1\ldots \mathrm{d}t_q,
 \end{align*}
     where $\beta_1,\ldots \beta_q > 0$ and $(x_1,\ldots, x_q) \ne (-1,\ldots,-1) \text{ for all } (x_1,\ldots, x_q) \in I^q.$
\end{definition}
\begin{theorem}\label{Thm5.2}
Let $\mathcal{A}$ be the FIF defined as in Equation \eqref{interpolation} which is determined by the IFS defined in Equation \eqref{IFS}, then $\mathfrak{J}^{(\beta_1,\ldots, \beta_q)}\mathcal{A}$ will satisfy the following self referential equation
\begin{equation*}
\mathfrak{J}^{(\beta_1,\ldots, \beta_q)}\mathcal{A}\left(x_1,\ldots,x_q\right)=\reallywidehat{F}_{i_1\cdots i_q}\left(u_{1,i_1}^{-1}(x_1),\ldots,u_{q,i_q}^{-1}(x_q),~ \mathfrak{J}^{(\beta_1,\ldots, \beta_q)}\mathcal{A}\left(u_{1,i_1}^{-1}(x_1),\ldots,u_{q,i_q}^{-1}(x_q))\right)\right),
\end{equation*}
where $\reallywidehat{F}_{i_1\cdots i_q}: X\rightarrow \mathbb{R}$ is defined as,
\[\reallywidehat{F}_{i_1\cdots i_q}(x_1,\ldots,x_q,z)=\delta a_{q,i_{q}}^{\beta_q}\cdots a_{1,i_{1}}^{\beta_1} z+\reallywidehat{\mathcal{B}}_{i_1\cdots i_q}(x_1,\ldots , x_q),\]
where
\begin{align*}
&\reallywidehat{\mathcal{B}}_{i_1\cdots i_q}(x_1,\ldots,x_q)\\
=&~\mathcal{M}\left(\int_{0}^{u_{1,i_{1}}(x_1)}\ldots \int_{0}^{u_{q,i_q}(0)}(u_{1,i_1}(x_1)-t_1)^{\beta_1-1}\cdots (u_{q,i_q}(x_q)-t_q)^{\beta_q-1}\mathcal{A}(t_1,\ldots, t_q)~\mathrm{d}t_1\cdots \mathrm{d}t_q\right.\\
  &+ a_{q,i_{q}}^{\beta_q}\int_{0}^{u_{1,i_1}(x_1)}\cdots \int_{0}^{u_{q-1,i_{q-1}}(0)}\int_{0}^{x_q}(u_{1,i_1}(x_1)-t_1)^{\beta_1-1}\cdots (x_q-w_q)^{\beta_q-1}\\
  & \hspace{8cm}\mathcal{A}(t_1,\ldots, u_{q,i_q}(w_q))~\mathrm{d}t_1\cdots \mathrm{d}t_{q-1}\mathrm{d}w_q\\
  &+ a_{q,i_q}^{\beta_q}a_{q-1,i_{q-1}}^{\beta_{q-1}}\int_{0}^{u_{1,i_1}(x_1)}\cdots \int_{0}^{x_{q-1}}\int_{0}^{x_q}(u_{1,i_1}(x_1)-t_1)^{\beta_1-1}\cdots (x_{q-1}-w_{q-1})^{\beta_{q-1}-1}(x_q-w_q)^{\beta_q-1}\\
  &\hspace{5cm} \mathcal{A}(t_1,\ldots,t_{q-2}, u_{q-1,i_{q-1}}(w_{q-1}),u_{q,i_q}(w_q))~\mathrm{d}t_1\cdots \mathrm{d}t_{q-2}\mathrm{d}w_{q-1}\mathrm{d}w_q\\
  &+\cdots +a_{q,i_{q}}^{\beta_q}\cdots a_{2,i_{2}}^{\beta_2}\int_{0}^{u_{1,i_{1}}(0)}\int_{0}^{x_2}\cdots \int_{0}^{x_q}(u_{1,i_1}(x_1)-t_1)^{\beta_1-1}(x_2-w_2)^{\beta_2-1}\cdots (x_q-w_q)^{\beta_q-1}\\
  &\hspace{7cm}\mathcal{A}(t_1,u_{2,i_2}(w_2),\ldots,u_{q,i_q}(w_q))~\mathrm{d}t_1\mathrm{d}w_2\cdots \mathrm{d}w_{q}\\
  &\left. +a_{q,i_{q}}^{\beta_q}\cdots a_{1,i_{1}}^{\beta_1}\int_{0}^{x_1}\cdots \int_{0}^{x_q}(x_1-w_1)^{\beta_1-1}\cdots (x_q-w_q)^{\beta_q-1}\mathcal{B}_{i_1\cdots i_q}(w_1,\ldots, w_q)~\mathrm{d}w_1\cdots \mathrm{d}w_{q}\right),
\end{align*}
where $\mathcal{M}= \frac{1}{\Gamma (\beta_1)\cdots \Gamma(\beta_q)}$.
\end{theorem}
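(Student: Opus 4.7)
The plan is to compute $\mathfrak{J}^{(\beta_1,\ldots,\beta_q)}\mathcal{A}$ at a point inside the sub-block $\prod_{k=1}^{q} I_{k,i_k}$ directly from the definition of the mixed Riemann--Liouville integral, and then to decompose the resulting iterated integral by sequentially splitting each coordinate's integration domain at the interface $u_{k,i_k}(0)$. Writing $y_k:=u_{k,i_k}^{-1}(x_k)\in I$ so that $x_k=u_{k,i_k}(y_k)$, the target becomes to show that $\mathfrak{J}^{(\beta_1,\ldots,\beta_q)}\mathcal{A}\bigl(u_{1,i_1}(y_1),\ldots,u_{q,i_q}(y_q)\bigr)$ equals $\delta\,a_{1,i_1}^{\beta_1}\cdots a_{q,i_q}^{\beta_q}\,\mathfrak{J}^{(\beta_1,\ldots,\beta_q)}\mathcal{A}(y_1,\ldots,y_q)+\widehat{\mathcal{B}}_{i_1\cdots i_q}(y_1,\ldots,y_q)$.

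Starting from
\[
\mathfrak{J}^{(\beta_1,\ldots,\beta_q)}\mathcal{A}\bigl(u_{1,i_1}(y_1),\ldots,u_{q,i_q}(y_q)\bigr)=\mathcal{M}\int_0^{u_{1,i_1}(y_1)}\!\!\cdots\int_0^{u_{q,i_q}(y_q)}\prod_{k=1}^{q}(u_{k,i_k}(y_k)-t_k)^{\beta_k-1}\mathcal{A}(t_1,\ldots,t_q)\,dt_1\cdots dt_q,
\]
I would peel off the $t_q$-integral as $\int_0^{u_{q,i_q}(0)}+\int_{u_{q,i_q}(0)}^{u_{q,i_q}(y_q)}$. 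The first summand, with the other coordinates' domains untouched, is precisely the first term of $\widehat{\mathcal{B}}_{i_1\cdots i_q}$. On the second summand, I would change variables $t_q=u_{q,i_q}(w_q)=a_{q,i_q}w_q+b_{q,i_q}$; the identity $u_{q,i_q}(y_q)-u_{q,i_q}(w_q)=a_{q,i_q}(y_q-w_q)$ together with $dt_q=a_{q,i_q}\,dw_q$ extracts a factor $a_{q,i_q}^{\beta_q}$ and shrinks the domain to $w_q\in[0,y_q]$. Repeating the same peel-and-change on $t_{q-1}$ produces the second term of $\widehat{\mathcal{B}}_{i_1\cdots i_q}$ with coefficient $a_{q,i_q}^{\beta_q}$, and continuing sequentially down to coordinate $1$ yields a telescoping decomposition into $q$ boundary pieces and one deepest piece in which every $t_k$ has been replaced by $w_k\in[0,y_k]$ and the prefactor $a_{1,i_1}^{\beta_1}\cdots a_{q,i_q}^{\beta_q}$ has accumulated.

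On the deepest piece, the integrand $\mathcal{A}\bigl(u_{1,i_1}(w_1),\ldots,u_{q,i_q}(w_q)\bigr)$ is rewritten via the self-referential equation \eqref{interpolation} as $\delta\,\mathcal{A}(w_1,\ldots,w_q)+\mathcal{B}_{i_1\cdots i_q}(w_1,\ldots,w_q)$. The $\delta\mathcal{A}$ contribution reassembles, after multiplication by $\mathcal{M}$, into $\delta\,a_{1,i_1}^{\beta_1}\cdots a_{q,i_q}^{\beta_q}\,\mathfrak{J}^{(\beta_1,\ldots,\beta_q)}\mathcal{A}(y_1,\ldots,y_q)$, which is the linear-in-$z$ term of $\widehat{F}_{i_1\cdots i_q}$ evaluated at $z=\mathfrak{J}^{(\beta_1,\ldots,\beta_q)}\mathcal{A}(y_1,\ldots,y_q)$, while the $\mathcal{B}_{i_1\cdots i_q}$ contribution matches the final summand of $\widehat{\mathcal{B}}_{i_1\cdots i_q}$. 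Adding up the $q$ peeled boundary pieces and this last contribution produces the claimed identity.

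The main obstacle is bookkeeping rather than analytic difficulty: at each stage one must track which coordinates remain in ``$t$-form'' versus those already in ``$w$-form'', align the accumulating Jacobian product $a_{q,i_q}^{\beta_q}\cdots a_{q-k+2,i_{q-k+2}}^{\beta_{q-k+2}}$ with the appropriate boundary piece, and verify that the upper limit for each not-yet-transformed coordinate $t_j$ really remains $u_{j,i_j}(y_j)$ (independent of the transformations already performed on higher-indexed coordinates, since those affect only the integrand via $\mathcal{A}$). A clean way to discipline the calculation is induction on $q$: the base $q=1$ is a short one-variable computation, and the inductive step is exactly the outermost peel-and-change performed on the last coordinate, with the inner integral handled by the inductive hypothesis. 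One should also flag that the appearance of $a_{k,i_k}^{\beta_k}$ tacitly uses the orientation-preserving choice of $u_{k,i_k}$ (the odd-$i_k$ branch in Note \ref{Naffine}); otherwise $u_{k,i_k}(0)$ need not lie in $[0,u_{k,i_k}(y_k)]$ and the splitting point, and hence the formula, must be modified.
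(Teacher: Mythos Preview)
Your proposal is correct and follows essentially the same approach as the paper's proof: start from $\mathfrak{J}^{(\beta_1,\ldots,\beta_q)}\mathcal{A}\bigl(u_{1,i_1}(y_1),\ldots,u_{q,i_q}(y_q)\bigr)$, split each $t_k$-integral at $u_{k,i_k}(0)$, substitute $t_k=u_{k,i_k}(w_k)$ on the inner piece to extract $a_{k,i_k}^{\beta_k}$, iterate down to $k=1$, and then invoke \eqref{interpolation} on the deepest piece. Your remark about the orientation-preserving assumption (odd $i_k$) is a genuine subtlety that the paper's own proof does not address; the stated formula and the splitting $\int_0^{u_{k,i_k}(0)}+\int_{u_{k,i_k}(0)}^{u_{k,i_k}(y_k)}$ indeed presuppose $a_{k,i_k}>0$.
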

\begin{proof}
For $(x_1,\ldots, x_q)\in I^q$, we have
\begin{align*}
  &\mathfrak{J}^{(\beta_1,\ldots, \beta_q)}\mathcal{A}\left(u_{1,i_{1}}(x_1),\ldots, u_{q,i_{q}}(x_q)\right)\\
  =& \mathcal{M}\int_{0} ^{u_{1,i_{1}}(x_1)} \cdots \int_{0}^{u_{q,i_{q}}(x_q)} (u_{1,i_{1}}(x_1)-t_1)^{\beta_1-1} \cdots  (u_{q,i_{q}}(x_q)-t_q)^{\beta_q-1}\mathcal{A}(t_1,\ldots, t_q)~ \mathrm{d}t_1\cdots \mathrm{d}t_q\\
  =& \mathcal{M} \left(\int_{0} ^{u_{1,i_{1}}(x_1)} \cdots \int_{0}^{u_{q,i_{q}}(0)} (u_{1,i_{1}}(x_1)-t_1)^{\beta_1-1} \cdots  (u_{q,i_{q}}(x_q)-t_q)^{\beta_q-1}\mathcal{A}(t_1,\ldots, t_q)~ \mathrm{d}t_1\cdots \mathrm{d}t_q\right.\\
  &\hspace{0.5cm} +\left.\int_{0} ^{u_{1,i_{1}}(x_1)} \cdots \int_{u_{q,i_{q}}(0)}^{u_{q,i_{q}}(x_q)} (u_{1,i_{1}}(x_1)-t_1)^{\beta_1-1} \cdots  (u_{q,i_{q}}(x_q)-t_q)^{\beta_q-1}\mathcal{A}(t_1,\ldots, t_q)~ \mathrm{d}t_1\cdots \mathrm{d}t_q \right)\\
  & \text{ put } t_q=u_{q,i_{q}}(w_q) \text{ in the second integral, then we get}\\
  =& \mathcal{M} \left(\int_{0} ^{u_{1,i_{1}}(x_1)} \cdots \int_{0}^{u_{q,i_{q}}(0)} (u_{1,i_{1}}(x_1)-t_1)^{\beta_1-1} \cdots  (u_{q,i_{q}}(x_q)-t_q)^{\beta_q-1}\mathcal{A}(t_1,\ldots, t_q)~ \mathrm{d}t_1\cdots \mathrm{d}t_q\right.
  \end{align*}
  \begin{align*}
  &+a_{q,i_q}\int_{0} ^{u_{1,i_{1}}(x_1)} \cdots \int_{0}^{x_q} (u_{1,i_{1}}(x_1)-t_1)^{\beta_1-1} \cdots  (u_{q,i_{q}}(x_q)-u_{q,i_{q}}(w_q))^{\beta_q-1}\\
  &\hspace{9cm} \mathcal{A}(t_1,\ldots, u_{q,i_{q}}(w_q))~ \mathrm{d}t_1\cdots \mathrm{d}t_{q-1}\mathrm{d}w_q \Bigg)\\
  =&\mathcal{M}\left(\int_{0}^{u_{1,i_{1}}(x_1)}\ldots \int_{0}^{u_{q,i_q}(0)}(u_{1,i_1})(x_1)-t_1)^{\beta_1-1}\cdots (u_{q,i_q}(x_q)-t_q)^{\beta_q-1}\mathcal{A}(t_1,\ldots, t_q)~\mathrm{d}t_1\cdots \mathrm{d}t_q\right.\\
  &+ a_{q,i_{q}}^{\beta_q}\int_{0}^{u_{1,i_1}(x_1)}\cdots \int_{0}^{u_{q-1,i_{q-1}}(0)}\int_{0}^{x_q}(u_{1,i_1}(x_1)-t_1)^{\beta_1-1}\cdots (x_q-w_q)^{\beta_q-1}\\
  &\hspace{9cm}\mathcal{A}(t_1,\ldots, u_{q,i_q}(w_q))~\mathrm{d}t_1\cdots \mathrm{d}t_{q-1}\mathrm{d}w_q\\
  &+a_{q,i_{q}}^{\beta_q}\int_{0}^{u_{1,i_1}(x_1)}\cdots \int_{u_{q-1,i_{q-1}}(0)}^{u_{q-1,i_{q-1}}(x_{q-1})}\int_{0}^{x_q}(u_{1,i_1}(x_1)-t_1)^{\beta_1-1}\cdots (x_q-w_q)^{\beta_q-1}\\
  &\hspace{9cm}\mathcal{A}(t_1,\ldots, u_{q,i_q}(w_q))~\mathrm{d}t_1\cdots \mathrm{d}t_{q-1}\mathrm{d}w_q\Bigg)\\
  & \text{ put } t_{q-1}=u_{q-1,i_{q-1}}(w_{q-1}) \text{ in the third integral, then}\\
  =&\mathcal{M}\left(\int_{0}^{u_{1,i_{1}}(x_1)}\ldots \int_{0}^{u_{q,i_q}(0)}(u_{1,i_1})(x_1)-t_1)^{\beta_1-1}\cdots (u_{q,i_q}(x_q)-t_q)^{\beta_q-1}\mathcal{A}(t_1,\ldots, t_q)~\mathrm{d}t_1\cdots \mathrm{d}t_q\right.\\
  &+ a_{q,i_{q}}^{\beta_q}\int_{0}^{u_{1,i_1}(x_1)}\cdots \int_{0}^{u_{q-1,i_{q-1}}(0)}\int_{0}^{x_q}(u_{1,i_1}(x_1)-t_1)^{\beta_1-1}\cdots (x_q-w_q)^{\beta_q-1}\\
  &\hspace{9cm}\mathcal{A}(t_1,\ldots, u_{q,i_q}(w_q))~\mathrm{d}t_1\cdots \mathrm{d}t_{q-1}\mathrm{d}w_q\\
  &+a_{q,i_q}^{\beta_q}a_{q-1,i_{q-1}}\int_{0}^{u_{1,i_1}(x_1)}\cdots \int_{0}^{x_{q-1}}\int_{0}^{x_q}(u_{1,i_1}(x_1)-t_1)^{\beta_1-1}\cdots \left(u_{q-1,i_{q-1}}(x_{q-1})\right.\\
  &\left.-u_{q-1,i_{q-1}}(w_{q-1})\right)^{\beta_{q-1}-1}
  (x_q-w_q)^{\beta_q-1} \mathcal{A}(t_1,\ldots,t_{q-2}, u_{q-1,i_{q-1}}(w_{q-1}),u_{q,i_q}(w_q))\\
  &\hspace{11cm}\mathrm{d}t_1\cdots \mathrm{d}t_{q-2}\mathrm{d}w_{q-1}\mathrm{d}w_q \Bigg)\\
  =&\mathcal{M}\left(\int_{0}^{u_{1,i_{1}}(x_1)}\ldots \int_{0}^{u_{q,i_q}(0)}(u_{1,i_1})(x_1)-t_1)^{\beta_1-1}\cdots (u_{q,i_q}(x_q)-t_q)^{\beta_q-1}\mathcal{A}(t_1,\ldots, t_q)~\mathrm{d}t_1\cdots \mathrm{d}t_q\right.\\
  &+ a_{q,i_{q}}^{\beta_q}\int_{0}^{u_{1,i_1}(x_1)}\cdots \int_{0}^{u_{q-1,i_{q-1}}(0)}\int_{0}^{x_q}(u_{1,i_1}(x_1)-t_1)^{\beta_1-1}\cdots (x_q-w_q)^{\beta_q-1}\mathcal{A}(t_1,\ldots, u_{q,i_q}(w_q))\\
  &\hspace{13cm}\mathrm{d}t_1\cdots \mathrm{d}t_{q-1}\mathrm{d}w_q\\
  &+ a_{q,i_q}^{\beta_q}a_{q-1,i_{q-1}}^{\beta_{q-1}}\int_{0}^{u_{1,i_1}(x_1)}\cdots \int_{0}^{x_{q-1}}\int_{0}^{x_q}(u_{1,i_1}(x_1)-t_1)^{\beta_1-1}\cdots (x_{q-1}-w_{q-1})^{\beta_{q-1}-1}(x_q-w_q)^{\beta_q-1}\\
  &\hspace{5cm} \mathcal{A}(t_1,\ldots,t_{q-2}, u_{q-1,i_{q-1}}(w_{q-1}),u_{q,i_q}(w_q))~\mathrm{d}t_1\cdots \mathrm{d}t_{q-2}\mathrm{d}w_{q-1}\mathrm{d}w_q \Bigg)\\
   &\text{going on with similar steps, we get}\\
  =&\mathcal{M}\left(\int_{0}^{u_{1,i_{1}}(x_1)}\ldots \int_{0}^{u_{q,i_q}(0)}(u_{1,i_1})(x_1)-t_1)^{\beta_1-1}\cdots (u_{q,i_q}(x_q)-t_q)^{\beta_q-1}\mathcal{A}(t_1,\ldots, t_q)~\mathrm{d}t_1\cdots \mathrm{d}t_q\right.\\
  &+a_{q,i_{q}}^{\beta_q}\int_{0}^{u_{1,i_1}(x_1)}\cdots \int_{0}^{u_{q-1,i_{q-1}}(0)}\int_{0}^{x_q}(u_{1,i_1}(x_1)-t_1)^{\beta_1-1}\cdots (x_q-w_q)^{\beta_q-1}\mathcal{A}(t_1,\ldots, u_{q,i_q}(w_q))\\
  &\hspace{12cm}\mathrm{d}t_1\cdots \mathrm{d}t_{q-1}\mathrm{d}w_q\\
  &+ a_{q,i_q}^{\beta_q}a_{q-1,i_{q-1}}^{\beta_{q-1}}\int_{0}^{u_{1,i_1}(x_1)}\cdots \int_{0}^{x_{q-1}}\int_{0}^{x_q}(u_{1,i_1}(x_1)-t_1)^{\beta_1-1}\cdots (x_{q-1}-w_{q-1})^{\beta_{q-1}-1}(x_q-w_q)^{\beta_q-1}\\
  &\hspace{5cm} \mathcal{A}(t_1,\ldots,t_{q-2}, u_{q-1,i_{q-1}}(w_{q-1}),u_{q,i_q}(w_q))~\mathrm{d}t_1\cdots \mathrm{d}t_{q-2}\mathrm{d}w_{q-1}\mathrm{d}w_q
  \end{align*}
  \begin{align*}
  &+\cdots +a_{q,i_{q}}^{\beta_q}\cdots a_{2,i_{2}}^{\beta_2}\int_{0}^{u_{1,i_{1}}(0)}\int_{0}^{x_2}\cdots \int_{0}^{x_q}(u_{1,i_1}(x_1)-t_1)^{\beta_1-1}(x_2-w_2)^{\beta_2-1}\cdots (x_q-w_q)^{\beta_q-1}\\
  &\hspace{8cm}\mathcal{A}(t_1,u_{2,i_2}(w_2),\ldots,u_{q,i_q}(w_q))~\mathrm{d}t_1\mathrm{d}w_2\cdots \mathrm{d}w_{q}\\
  &+\left. a_{q,i_{q}}^{\beta_q}\cdots a_{1,i_{1}}^{\beta_1}\int_{0}^{x_1}\cdots \int_{0}^{x_q}(x_1-w_1)^{\beta_1-1}\cdots (x_q-w_q)^{\beta_q-1}\mathcal{A}(u_{1,i_1}(w_1),\ldots,u_{q,i_q}(w_q))~\mathrm{d}w_1\cdots \mathrm{d}w_{q}\right)\\
  =&\mathcal{M}\left(\int_{0}^{u_{1,i_{1}}(x_1)}\ldots \int_{0}^{u_{q,i_q}(0)}(u_{1,i_1})(x_1)-t_1)^{\beta_1-1}\cdots (u_{q,i_q}(x_q)-t_q)^{\beta_q-1}\mathcal{A}(t_1,\ldots, t_q)~\mathrm{d}t_1\cdots \mathrm{d}t_q\right.\\
  &+ a_{q,i_{q}}^{\beta_q}\int_{0}^{u_{1,i_1}(x_1)}\cdots \int_{0}^{u_{q-1,i_{q-1}}(0)}\int_{0}^{x_q}(u_{1,i_1}(x_1)-t_1)^{\beta_1-1}\cdots (x_q-w_q)^{\beta_q-1}\mathcal{A}(t_1,\ldots, u_{q,i_q}(w_q))\\
  &\hspace{12cm}\mathrm{d}t_1\cdots \mathrm{d}t_{q-1}\mathrm{d}w_q\\
  &+ a_{q,i_q}^{\beta_q}a_{q-1,i_{q-1}}^{\beta_{q-1}}\int_{0}^{u_{1,i_1}(x_1)}\cdots \int_{0}^{x_{q-1}}\int_{0}^{x_q}(u_{1,i_1}(x_1)-t_1)^{\beta_1-1}\cdots (x_{q-1}-w_{q-1})^{\beta_{q-1}-1}(x_q-w_q)^{\beta_q-1}\\
  &\hspace{5cm} \mathcal{A}(t_1,\ldots,t_{q-2}, u_{q-1,i_{q-1}}(w_{q-1}),u_{q,i_q}(w_q))~\mathrm{d}t_1\cdots \mathrm{d}t_{q-2}\mathrm{d}w_{q-1}\mathrm{d}w_q\\
  &+\cdots +a_{q,i_{q}}^{\beta_q}\cdots a_{2,i_{2}}^{\beta_2}\int_{0}^{u_{1,i_{1}}(0)}\int_{0}^{x_2}\cdots \int_{0}^{x_q}(u_{1,i_1}(x_1)-t_1)^{\beta_1-1}(x_2-w_2)^{\beta_2-1}\cdots (x_q-w_q)^{\beta_q-1}\\
  &\hspace{7cm}\mathcal{A}(t_1,u_{2,i_2}(w_2),\ldots,u_{q,i_q}(w_q))~\mathrm{d}t_1\mathrm{d}w_2\cdots \mathrm{d}w_{q}\\
  &+a_{q,i_{q}}^{\beta_q}\cdots a_{1,i_{1}}^{\beta_1}\int_{0}^{x_1}\cdots \int_{0}^{x_q}(x_1-w_1)^{\beta_1-1}\cdots (x_q-w_q)^{\beta_q-1}\delta\mathcal{A}(w_1,\ldots, w_q)~\mathrm{d}w_1\cdots \mathrm{d}w_{q}\\
  &\left. +a_{q,i_{q}}^{\beta_q}\cdots a_{1,i_{1}}^{\beta_1}\int_{0}^{x_1}\cdots \int_{0}^{x_q}(x_1-w_1)^{\beta_1-1}\cdots (x_q-w_q)^{\beta_q-1}\mathcal{B}_{i_1\cdots i_q}(w_1,\ldots, w_q)~\mathrm{d}w_1\cdots \mathrm{d}w_{q}\right)\\
  =&\delta a_{q,i_{q}}^{\beta_q}\cdots a_{1,i_{1}}^{\beta_1} \mathfrak{J}^{(\beta_1,\ldots, \beta_q)}\mathcal{A}\left(x_1,\ldots, x_q\right)+\reallywidehat{\mathcal{B}}_{i_1\cdots i_q}(x_1,\ldots, x_q) ,
 \end{align*}
 establishing the claim. 
 \end{proof}

\begin{remark}
 The above theorem can be compared with Theorem $5.3$ of \cite{SS}, wherein the calculus of fractal surfaces has been studied. 
\end{remark}
\section*{Conclusion and Future Problems}
In this paper, we have given the construction of multivariate fractal interpolation function corresponding to the given data set (Subsection \ref{subsec1}). We have noticed that the constructed fractal interpolation function is an attractor of an IFS (Theorem \ref{matching}) and satisfies the self-referential equation (Equation \eqref{interpolation}). Next, we have established the existence of multivariate $\alpha$-fractal function corresponding to the continuous function defined on $I^q$ (Subsection \ref{FIF}). Next, we have studied the fractal dimension of multivariate $\alpha$-fractal function (Theorem \ref{Th4.10}, Theorem \ref{BBVL3}), and also studied the restriction of $\alpha$-fractal function on the co-ordinate axis (Subsection \ref{Restrictions}). In the end, we have proved that the mixed Riemann-Liouville fractional integral of a multivariate fractal interpolation function satisfies the self-referential equation (Theorem \ref{Thm5.2}). 

In Subsection \ref{Restrictions}, we have studied the restriction of multivariate $\alpha$-fractal functions on co-ordinate axes. Next we may try to explore the projections of multivariate $\alpha$-fractal functions and find the fractal dimension of such projections. A glimpse of study of orthogonal projections of fractal structures can be seen in \cite{Fal}.

\section*{Author's Contribution}
All authors contributed equally.

\section*{Acknowledgements}
 This work is supported by MHRD Fellowship to the 1st and 2nd author as TA-ship at the Indian Institute of Technology (BHU), Varanasi.

\bibliographystyle{abbrv}
\bibliography{Arxiv}

\end{document}